\newtheorem{thm}{Theorem}[section]
\newtheorem{cor}[thm]{Corollary}
\newtheorem{lem}[thm]{Lemma}
\newtheorem{prop}[thm]{Proposition}
\theoremstyle{remark}
\newtheorem{rem}{Remark}[section]
 \def\a{{\alpha}} 
 \def\b{{\beta}}
 \def\g{{\gamma}}
 \def\t{{\theta}}
 \def\d{{\delta}}
 \def\s{{\sigma}}
 \def\la{{\langle}}
 \def\ra{{\rangle}} 
 \def\ve{{\varepsilon}}
 \def\jb{{\mathbf j}}
 \def\kb{{\mathbf k}}
 \def\lb{{\mathbf l}}
 \def\sb{{\mathbf s}}
 \def\tb{{\mathbf t}}
 \def\ub{{\mathbf u}}
 \def\xb{{\mathbf x}}
 \def\CA{{\mathcal A}}
 \def\CE{{\mathcal E}}
 \def\CH{{\mathcal H}}
 \def\CP{{\mathcal P}}     
 \def\CT{{\mathcal T}}
  \def\HH{{\mathbb H}}
  \def\JJ{{\mathbb J}}
 \def\NN{{\mathbb N}}
 \def\RR{{\mathbb R}}
 \def\ZZ{{\mathbb Z}}
 \newcommand{\e}{\mathrm{e}}
 \newcommand{\tr}{{\mathsf {tr}}}
 \newcommand{\TC}{{\mathsf {TC}}}
\newcommand{\TS}{{\mathsf {TS}}}
 \def\CTC{{\mathcal T\!C}}
	\def\sspan{\operatorname{span}}
\newcommand{\wt}{\widetilde}
\newcommand{\wh}{\widehat}
\begin{document}

\title[Fourier series and approximation on hexagon and triangle]
{Fourier series and approximation on hexagonal and  triangular domains}

\author{Yuan Xu}
\address{Department of Mathematics\\ University of Oregon\\
    Eugene, Oregon 97403-1222.}\email{yuan@math.uoregon.edu}

\date{\today}
\keywords{Fourier series, Poisson kernel, Ces\`aro means, best approximation,
hexagon, triangle}
\subjclass[2000]{42B08, 41A25, 41A63}
\thanks{The work was supported in part by NSF Grant DMS-0604056}

\begin{abstract}
Several problems on Fourier series and trigonometric approximation on a
hexagon and a triangle are studied. The results include Abel and Ces\`aro 
summability of Fourier series,  degree of approximation and best 
approximation by trigonometric functions, both direct and inverse theorems. 
One of the objective of this study is to demonstrate that Fourier series on 
spectral sets enjoy a rich structure that allow an extensive theory for Fourier 
expansions and approximation. 
\end{abstract}

\maketitle

\section{Introduction}
\setcounter{equation}{0}

A theorem of Fuglede \cite{F} states that a set tiles $\RR^n$ by lattice translation 
if and only if it has an orthonormal basis of exponentials $e^{i \la \a, x\ra}$ with 
$\a$ in the dual lattice. Such a  set is called a spectral set.  The theorem suggests 
that one can study Fourier series and approximation on a spectral set.  For the 
simplest spectral sets, cubes in $\RR^d$, we are in the familiar territory of classical 
(multiple) Fourier series. For other spectral sets, such a study has mostly remained 
at the structural property in the $L^2$ level and it seems to have attracted little 
attention among researchers in approximation theory. 

Besides the usual rectangular domain, the simplest spectral set is a regular 
hexagon on the plane, which has been studied in connection with Fourier
analysis in \cite{AB, Sun}. Recently in \cite{LSX}, discrete Fourier analysis on 
lattices was developed and the case of hexagon lattice was studied in detail; in
particular, Lagrange  interpolation and cubature formulas by trigonometric 
functions on a regular hexagon and on an equilateral triangle were studied. Here
we follow the set up in \cite{LSX} to study the summability of Fourier series and 
approximation. The purpose of this paper is to show, using the hexagonal domain
as an example, that Fourier series on a spectral set has a rich structure that 
permits an extensive theory of Fourier expansions and approximation.  It is 
our hope that this work may stimulate further studies in this area.  

It should be mentioned that, in response to a problem on construction and analysis
of hexagonal optical elements, orthogonal polynomials on the hexagon were 
studied in \cite{Dunkl} in which a method of generating an orthogonal polynomial 
basis was developed. We study orthogonal expansion and approximation by 
trigonometric functions on the hexagon domain. 

In comparison to the usual Fourier series for periodic functions in both variables
on the plane,  the periodicity of the Fourier series on a hexagonal domain is 
defined in terms of the hexagon lattice, which has the symmetry of the reflection
group $\CA_2$ with reflections along the edges of the hexagon. The functions 
that we consider are periodic under the translation of hexagonal lattice. It turns
out (\cite{LSX, Sun}) that it is convenient to use homogenous coordinates that 
satisfy $t_1 + t_2 +t_3 =0$ in $\RR^3$ rather than coordinates in $\RR^2$. 
Using homogenous coordinates allows us to treat the three directions equally
and it reveals symmetry in various formulas that are not obvious in  $\RR^2$ 
coordinates. As we shall show below, many results and formulas resemble closely
to those of Fourier analysis and approximation on $2\pi$ periodic functions. 

Fourier analysis on the hexagonal domain can be approached from several 
directions. Orthogonal exponentials on the hexagonal domain are related to 
trigonometric functions on an equilateral triangle, upon considering symmetric 
exponentials on the hexagonal domain. These trigonometric functions arise 
from solutions of Laplacian on the equilateral triangle, as seen in \cite{K} and 
developed extensively in \cite{Sun, LS}, and they are closely related to 
orthogonal algebraic polynomials on the domain bounded by Steiner's 
hyercycloid \cite{K, LSX}, much as Chebyshev polynomials arise from 
exponentials. In fact, the trigonometric functions arise from the exponentials 
by symmetry are called generalized cosine functions in \cite{LSX,Sun}, and 
there are also generalized sine functions that are anti-symmetric. Our results 
on the hexagonal domain can be easily translated to results in terms of 
generalized cosines. Our results on summability can also be translated to 
orthogonal expansions of algebraic polynomials on the domain bounded by 
hypercycloid, but the same cannot be said on our results on best approximation. 
In fact, just like the case of best approximation by polynomial on the interval, 
the approximation should be better at the boundary for polynomial 
approximation on the hypercycloid domain. For example, our Bernstein type 
inequality (Theorem 4.8) can be translated into a Markov type inequality 
for algebraic polynomials. A modulus of smoothness will need to be defined
to take into account of the boundary effect of the hypercycloid domain, 
which is not trivial and will not be considered in this paper. Some of our 
results, especially those on the best approximation, can be extended to higher 
dimensions. We choose to stay on the hexagonal domain to keep an
uniformity of the paper and to stay away from overwhelming notations. 

The paper is organized as follows. Definitions and background materials will
be given in Section 2.  In Section 3 we study the Abel summability, aka Poisson 
integral, and Ces\`aro $(C,\delta)$ means of the Fourer series on the hexagon,
where several compact formulas for the kernel functions will be deduced. One 
interesting  result shows that the $(C,2)$ means are nonnegative, akin to the
Fej\`er means for the classical Fourier series. In Section 4 we study best 
approximation by trigonometric functions on the hexagonal domain and 
establish both direct and inverse theorems in terms of a modulus of 
smoothness.

\section{Fourier series on the regular hexagon}
\setcounter{equation}{0}

Below we briefly sum up what we need on Fourier analysis on hexagonal 
domain. We refer to \cite{LSX} for further details. The hexagonal lattice 
is given by $H \ZZ^2$, where the matrix $H$ and the spectral set
$\Omega_H$ are given by
$$
H=\begin{pmatrix} \sqrt{3} & 0\\ -1 & 2\end{pmatrix}, \qquad 
\Omega_H =\left\{(x_1,x_2):\  -1\leq x_2, \tfrac{\sqrt{3}}{2}x_1 \pm 
   \tfrac{1}{2} x_2 < 1 \right\},
$$ 
respectively. The reason that $\Omega_H$ contains only half of its 
boundary is given in \cite{LSX}. We will use homogeneous coordinates
 $(t_1,t_2,t_3)$ that satisfies $t_1 + t_2 +t_3 =0$ for which the hexagonal
 domain $\Omega_H$ becomes 
\begin{align*}
\Omega=\left\{(t_1,t_2,t_3)\in \RR^3:\  -1\le  t_1,t_2,-t_3<1;\, 
    t_1+t_2+t_3=0 \right\},
\end{align*}
which is the intersection of the plane $t_1+t_2+t_3=0$ with the cube $[-1,1]^3$,
as seen in Figure 1.

\begin{figure}[h]
\centerline{\includegraphics[width=0.4\textwidth]{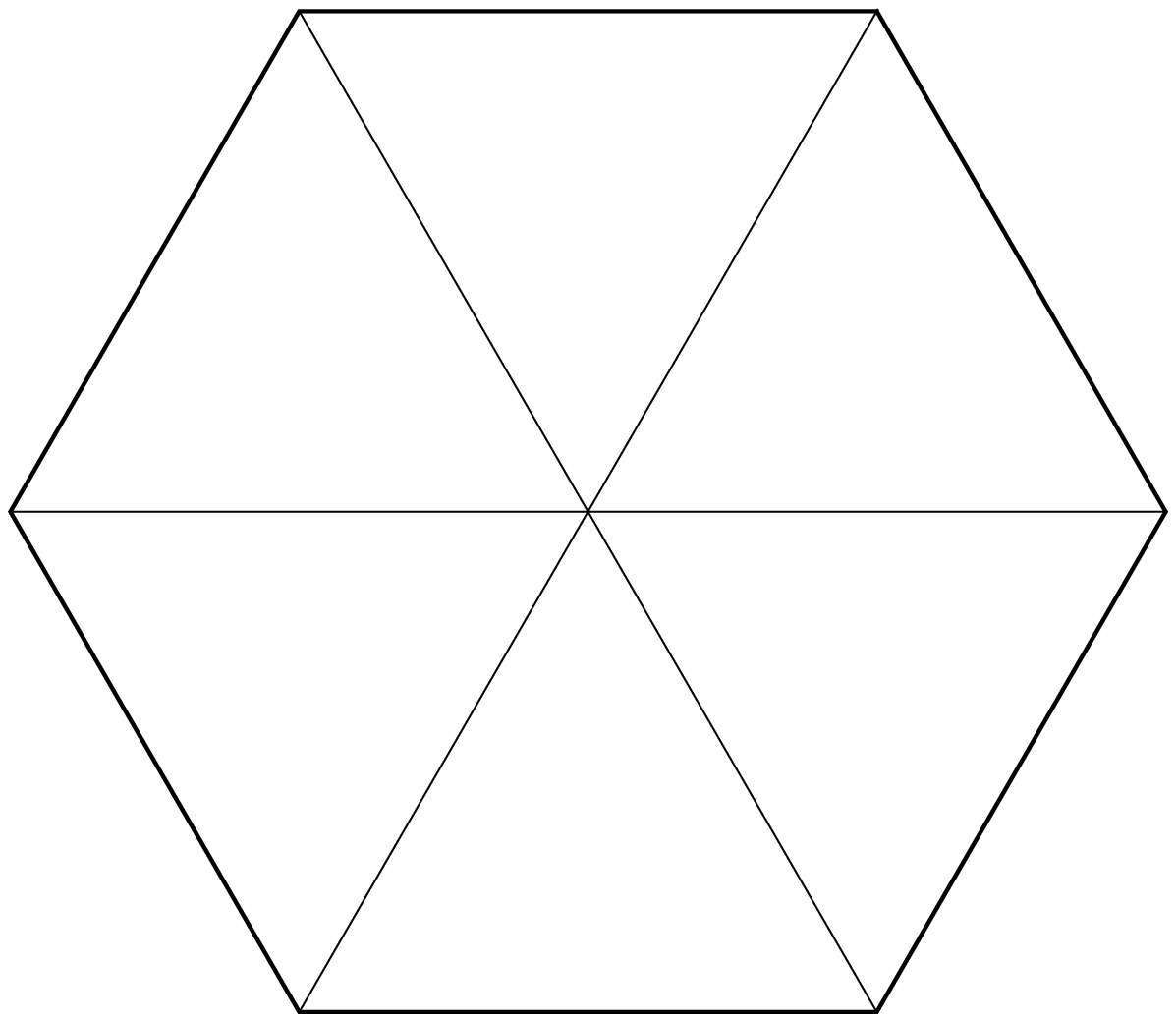} \qquad\quad
\includegraphics[width=0.375\textwidth]{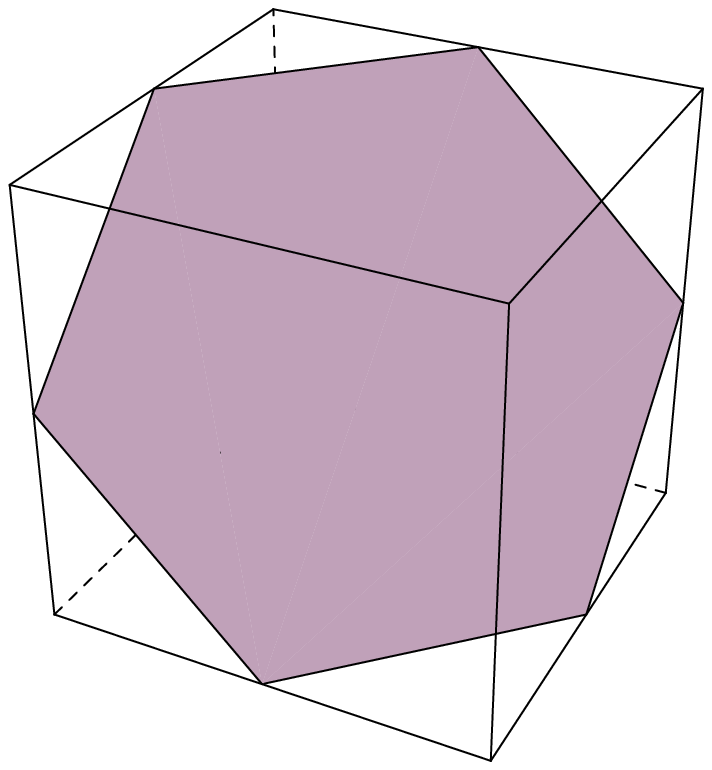}}
\caption{Regular hexagon in $\RR^2$ and in $\RR^3$.}
\end{figure}

The relation between $(x_1,x_2) \in \Omega_H$ and $\tb \in \Omega$ is
given by 
\begin{align}\label{coordinates}
 t_1= -\frac{x_2}{2} +\frac{\sqrt{3}x_1}{2},\quad t_2 = x_2, \quad t_3 = 
      -\frac{x_2}{2} -\frac{\sqrt{3}x_1}{2}. 
\end{align}
For convenience, we adopt the convention of using bold letters, such 
as $\tb$, to denote points in the space 
$$
    \RR_H^3 : = \{\tb = (t_1,t_2,t_3)\in \RR^3: t_1+t_2 +t_3 =0\}. 
$$
In other words, bold letters such as  $\tb$ stand for homogeneous coordinates.  
If  we treat $x \in \RR^2$ and $\tb \in \RR_H^3$ as column vectors,  then it 
follows from \eqref{coordinates} that 
\begin{equation} \label{x-t-x}
x = \tfrac13  H (t_1 - t_3, t_2-t_3)^\tr =  \tfrac13  H (2 t_1 + t_2, t_1+2t_2)^\tr 
\end{equation}
upon using the fact that $t_1 + t_2 + t_3 =0$. Computing the Jacobian of the 
change of variables shows that $d x = \frac{2 \sqrt{3}} {3} d t_1 dt_2$. 

A function $f$ is called {\it periodic} with respect to the hexagonal lattice, if 
$$ 
        f(x) = f (x + H k), \qquad k \in \ZZ^2. 
$$
We call such a function $H$-periodic. In homogeneous coordinates, 
$x\equiv y \pmod{H}$ becomes, as easily seen using \eqref{x-t-x}, 
$\tb  \equiv \sb \mod 3$, where we define
$$
 \tb \equiv \sb \mod 3 \quad \Longleftrightarrow \quad  t_1-s_1 \equiv t_2-s_2 
      \equiv t_3-s_3 \mod 3.
$$
Thus, a function $f (\tb)$ is $H$-periodic if $f (\tb) = f(\tb + \jb)$ whenever  
$\jb  \equiv 0 \mod 3$.  If $f$ is $H$-periodic, then it can be verified 
directly that 
\begin{equation} \label{IntPeriod}
 \int_{\Omega} f(\tb  + \sb) d\tb =   \int_{\Omega} f(\tb) d\tb, \qquad \sb \in \RR_H^3. 
\end{equation}

We define the inner product on the hexagonal domain by
\begin{align*}
\langle f, g\rangle_H := \frac{1}{|\Omega_H|} \int_{\Omega_H} f(x_1,x_2) 
   \overline{g(x_1,x_2)} d x_1 dx_2
    = \frac{1}{|\Omega|} \int_{\Omega} f(\tb) \overline{g(\tb)} d \tb,
\end{align*}
where $|\Omega|$ denote the area of $\Omega$. Furthermore, let 
$\ZZ_H^3: = \ZZ^3 \cap \RR_H^3$. We define
$$
\phi_\jb(\tb) : = \e^{\frac{2 \pi i}{3}\jb \cdot \tb}, \qquad \jb \in 
      \ZZ_H^3, \quad \tb \in \RR_H^3.
$$
These exponential functions are orthogonal with respect to $\la f,g\ra_H$.  

\begin{thm} \cite{F}. 
For $\kb,\jb \in \ZZ_H^3$, $\la\phi_\kb, \phi_\jb \ra = \delta_{\kb,\jb}$. 
Moreover, the set $\{\phi_\jb: \jb \in \ZZ_H^3\}$ is an orthonormal basis of
$L^2(\Omega)$.
\end{thm}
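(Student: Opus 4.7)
The idea is to realize $\Omega$ as a fundamental domain of the compact abelian group $\RR_H^3/L$, where $L := \{\sb\in\RR_H^3 : \sb\equiv 0\!\!\mod 3\}$ is the image of the hexagonal lattice $H\ZZ^2$ in homogeneous coordinates, and to view the $\phi_\jb$ as the characters of this torus. Orthonormality and completeness then reduce to standard Fourier theory on a compact abelian group.

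\textbf{Orthonormality.} One first checks that each $\phi_\jb$ is $H$-periodic, so it descends to the quotient: writing any $\sb\in L$ as $\sb = 3\lb + c(1,1,1)$ with $\lb\in\ZZ^3$ and $c = -(l_1+l_2+l_3)$ (forcing $\sb\in\RR_H^3$), and using $\jb\cdot(1,1,1) = 0$, one finds $\jb\cdot\sb = 3\,\jb\cdot\lb \in 3\ZZ$, hence $\phi_\jb(\tb+\sb) = \phi_\jb(\tb)$. Setting $\mathbf{m} = \kb-\jb\in\ZZ_H^3$, the identity \eqref{IntPeriod} together with the multiplicativity of $\phi_{\mathbf{m}}$ gives, for every $\sb\in\RR_H^3$,
\[
   \int_\Omega \phi_{\mathbf{m}}(\tb)\,d\tb = \int_\Omega \phi_{\mathbf{m}}(\tb+\sb)\,d\tb = \phi_{\mathbf{m}}(\sb)\int_\Omega \phi_{\mathbf{m}}(\tb)\,d\tb,
\]
so $\bigl(1-\phi_{\mathbf{m}}(\sb)\bigr)\int_\Omega \phi_{\mathbf{m}}(\tb)\,d\tb = 0$. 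If $\mathbf{m}\ne 0$, then $\sb\mapsto\phi_{\mathbf{m}}(\sb)$ is a continuous nontrivial character of $\RR_H^3$ (the only $\mathbf{m}\in\RR_H^3$ for which $\mathbf{m}\cdot\sb \equiv 0$ on $\RR_H^3$ is $\mathbf{m}=0$, by nondegeneracy of the Euclidean pairing on the $2$-plane), so some $\sb$ yields $\phi_{\mathbf{m}}(\sb)\ne 1$, forcing the integral to vanish. The case $\kb=\jb$ is immediate.

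\textbf{Completeness.} The span $\sspan\{\phi_\jb:\jb\in\ZZ_H^3\}$ is a $*$-subalgebra of $C(\Omega)$: closed under multiplication because $\ZZ_H^3$ is an additive group, under conjugation because $\overline{\phi_\jb} = \phi_{-\jb}$, and containing the constants. Provided it separates the points of the torus $\RR_H^3/L$, Stone--Weierstrass supplies uniform density in the continuous $H$-periodic functions on $\Omega$, and hence $L^2(\Omega)$-density. Point-separation is the one geometric input: for $\tb,\tb'\in\Omega$ with $\tb-\tb'\notin L$ one needs some $\jb\in\ZZ_H^3$ with $\phi_\jb(\tb)\ne\phi_\jb(\tb')$, which is the duality pairing $\ZZ_H^3\leftrightarrow\RR_H^3/L$. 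This is exactly Fuglede's theorem \cite{F} applied to the hexagonal tiling; converting to Cartesian coordinates via \eqref{coordinates} reduces it to the standard Pontryagin duality between $\RR^2/H\ZZ^2$ and $H^{-\tr}\ZZ^2$.

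\textbf{Main obstacle.} The substantive step is the identification of $(\ZZ_H^3,L)$ as mutually dual lattices under the pairing $\langle\jb,\tb\rangle = \tfrac{1}{3}\jb\cdot\tb$; the factor $\tfrac{2\pi i}{3}$ in the definition of $\phi_\jb$ is tuned precisely so that this pairing lands in $\ZZ$ on $\ZZ_H^3\times L$ and is nondegenerate modulo $\ZZ$. Everything else is routine Fourier analysis on $\RR_H^3/L$.
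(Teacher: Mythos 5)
The paper does not actually prove this theorem: it is stated with the citation \cite{F} and used as a black box, so there is no internal proof to compare against. Your argument is a correct, essentially self-contained proof along the standard lines of Fourier analysis on the compact group $\RR_H^3/L$, where $L=\{\sb\in\RR_H^3:\sb\equiv 0\bmod 3\}$ is your lattice. The orthonormality step is clean: $H$-periodicity of $\phi_\jb$ follows from $\jb\cdot(1,1,1)=0$ exactly as you say, and combining \eqref{IntPeriod} with $\phi_{\mathbf{m}}(\tb+\sb)=\phi_{\mathbf{m}}(\tb)\phi_{\mathbf{m}}(\sb)$ forces the integral to vanish for $\mathbf{m}\ne 0$, since a nonzero linear functional on the plane $\RR_H^3$ takes values outside $3\ZZ$. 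For completeness, the one step you defer to Fuglede --- point separation --- is in fact elementary and you could make it explicit: $\ZZ_H^3$ is generated by $(1,-1,0)$ and $(0,1,-1)$, and if $\phi_{(1,-1,0)}$ and $\phi_{(0,1,-1)}$ agree at $\tb$ and $\tb'$, then $(t_1-t_2)-(t_1'-t_2')$ and $(t_2-t_3)-(t_2'-t_3')$ lie in $3\ZZ$, which is precisely the paper's condition $\tb\equiv\tb'\bmod 3$, i.e.\ $\tb-\tb'\in L$; no appeal to \cite{F} is needed there. The genuine geometric input your proof borrows from the paper's setup is the tiling statement --- that $\Omega$ (with half its boundary) is a fundamental domain for $L$ --- which underlies both \eqref{IntPeriod} and the identification $L^2(\Omega)\cong L^2(\RR_H^3/L)$; granted that, the rest is as routine as you claim. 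What your route buys over the bare citation is a transparent proof using nothing beyond Stone--Weierstrass; what it gives up is the general spectral-set perspective that Fuglede's theorem is really about.
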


It is easy to see that $\phi_\jb$ are $H$-periodic functions. We  consider a 
special collection of them indexed by $\jb$ inside a hexagon. Define
$$
   \HH_n : = \{\jb \in \ZZ_H^3:  -n \le j_1,j_2,j_3 \le n\} \quad\hbox{and} \quad 
       \JJ_n: = \HH_n \setminus \HH_{n-1}.  
$$ 
Notice that $\jb \in \HH_n$ satisfies $j_1 + j_2 + j_3 =0$, so that $\HH_n$
contains all integer points inside the hexagon $n\, \overline{\Omega}$, whereas
$\JJ_n$ contains exactly those integer points in $\ZZ_H^3$ that are on the 
boundary of $n\, \Omega$, or $n$-th hexagonal line. We then define
\footnote{Here our notation differs from those in \cite{LSX}. Our $\HH_n$
and $\CH_n$ are in fact $\HH_n^*$ and $\CH_n^*$ there.}
\begin{equation} \label{Hn-space}
   \CH_n: = \sspan \left \{ \phi_\jb: \jb \in \HH_n \right \}. 
\end{equation}
It follows  that $\dim \CH_n = 3 n^2  +3n+1$. As we shall see below, the class 
$\CH_n$ shares many properties of the class of trigonometric polynomials 
of one variable. As a result, we shall call functions in $\CH_n$ trigonometric 
polynomials over $\Omega$. We will study the best approximation by 
trigonometric polynomials in $\CH_n$ in Section 4. 

By Theorem 2.1, the standard Hilbert space theory shows that an $H$-periodic 
function $f\in L^2(\Omega)$ can be expanded into a Fourier series
\begin{equation}\label{FourierSeries}
  f  = \sum_{\jb \in \ZZ_H^3} \wh f_\jb \phi_\jb  \quad \hbox{in $L^2(\Omega)$},
       \quad \hbox{where} \quad 
    \wh f_\jb :=  \frac{1}{|\Omega|} \int_{\Omega} f(\sb) e^{- \frac{2\pi i \sb}{3}} d\sb.
\end{equation}
We consider the $n$-th hexagonal Fourier partial sum defined by 
\begin{equation} \label{PartialSum} 
 S_n f(x) := \sum_{\jb \in \HH_n} \wh f_\jb  \phi_\jb(\tb) = 
       \frac{1}{|\Omega|} \int_{\Omega} f(\tb - \sb) D_n(\sb) d\sb,
\end{equation}
where the second equal sign follows from \eqref{IntPeriod} with the kernel
$D_n$ defined by 
$$
    D_n(\tb) : = \sum_{\jb \in \HH_n} \phi_\jb(\tb).  
$$
The kernel is an analogue of the Dirichlet kernel for the ordinary Fourier 
series. It enjoys a compact formula given by \cite{Sun} (see also \cite{LSX}) that 
\begin{equation}\label{D-kernel}
   D_n(\tb ) = \Theta_n(\tb) - \Theta_{n-1}(\tb), 
\end{equation}
where
\begin{equation}\label{Theta}
\Theta_n(\tb) = \frac{\sin \frac{(n+1) (t_1-t_2)\pi}{3}\sin \frac{(n+1) (t_2-t_3)\pi}{3}
  \sin \frac{(n+1) (t_3-t_1)\pi}{3}}
    {\sin \frac{(t_1-t_2)\pi}{3}\sin \frac{(t_2-t_3)\pi}{3}\sin \frac{(t_3-t_1)\pi}{3}}.
\end{equation}
This last formula is our starting point for studying summability of Fourier 
series in the following section. 

We will also need to use the Poisson summation formula associated with
the hexagonal lattice. This formula takes the form (see, for example, 
\cite{AB, H, LSX})
\begin{equation} \label{Poisson-pre}
  \sum_{k \in \ZZ^2} f(x + H k) = \frac{1}{\det (H)} \sum_{j \in \ZZ^2} 
     \wh f ( H^{-\tr} k) e^{2 \pi i k^\tr H^{-1} \xb}, 
\end{equation} 
where for $f\in L^1(\RR^2)$ the Fourier transform $\wh f$ and its inverse are 
defined by
$$
   \wh f(\xi) = \int_{\RR^2} f(x) \e^{-2\pi \xi \cdot x} dx \quad \hbox{and} \quad
   f(x) = \int_{\RR^2} \wh f(\xi) \e^{2\pi \xi \cdot x} dx,
$$
and the formula holds under the usual assumption on the convergence of the 
series in both sides.  Define Fourier transform in homogeneous coordinates 
by $\wh f(\tb) := \wh f(H^{-\tr} t)$ with $t = (t_1,t_2)$ as a column vector. Using 
\eqref{x-t-x}, it is easy to see that $\wh f$ and its inverse become, in 
homogeneous coordinates, 
\begin{equation} \label{FT}
  \wh f(\sb) =  \int_{\RR_H^3} f(\tb) \e^{- \frac{2\pi i}{3} \sb \cdot \tb} d\tb
   \qquad \hbox{and} \qquad
  f(\tb) =  \int_{\RR_H^3} \wh f(\sb) \e^{\frac{2\pi i}{3} \sb \cdot \tb} d\sb. 
\end{equation} 
Next we reformulate Poisson summation formula in homogenous coordinates. 
The left hand side of \eqref{Poisson-pre} is an $H$-periodic function over 
hexagonal lattice, which becomes the summation of $f(\tb + 3 \jb)$ over all 
$\jb \in \ZZ_H^3$ as $x \equiv y \mod H$ becomes $\tb \equiv \sb \mod 3$. 
For the right hand side, using the fact that $(k_1,k_2) H^{-1} x = 
\frac{1}{3} (k_1,k_2) (t_1-t_3, t_2-t_3)^\tr  = \frac{1}{3} \kb \cdot \tb$ 
by \eqref{x-t-x}, we obtain 
$$
  \wh f(H^{-\tr} k) =  \int_{\RR^2} f(x) \e^{-2 \pi k^\tr H^{-1} x} dx 
   = \frac{2 \sqrt{3}}{3} 
      \int_{\RR_H^3} f(\tb) \e^{-2 \pi \kb \cdot \tb} d\tb  =
            \frac{2 \sqrt{3}}{3} \wh f(\kb).
$$
Consequently, we conclude that the Poisson summation formula in 
\eqref{Poisson-pre} becomes, in homogeneous coordinates, 
\begin{equation} \label{Poisson}
 \sum_{\kb \in \ZZ_H^3} f(\tb + 3 \kb) = \frac{1}{3} \sum_{\jb \in \ZZ_H^3}
       \wh f(\jb) \e^{\frac{2\pi i}{3} \jb\cdot \tb}. 
\end{equation}

Throughout of this paper we will reserve the letter $c$ for a generic 
constant, whose value may change from line to line. By $A \sim B$ we
mean that there are two constants $c$ and $c'$ such that $c A \le B
 \le c' A$.

\section{Summability of Fourier series on Hexagon}
\setcounter{equation}{0}

We consider hexagonal summability of Fourier series \eqref{FourierSeries};  
that is, we write the Fourier series \eqref{FourierSeries} as blocks whose 
indices are grouped according to $\JJ_n$: 
$$
     f(\tb) = \sum_{n=0}^\infty \sum_{\jb \in \JJ_n} \wh f_\jb \phi_{\jb}(\tb).   
$$
From now on we call such a series hexagonal Fourier series. Its $n$-th
partial sum is exactly $S_n f$ given in \eqref{PartialSum}.
 
\subsection{Abel summability}
We consider Poisson integral $P_r f(t)$ of the hexagon Fourier series, which
is defined by
$$
   P_r f( \tb):=\sum_{n =0}^\infty \sum_{\jb \in \JJ_n} \wh f_\jb \phi_\jb(\tb) r^n
      =   \frac{1}{\Omega}  \int_{|\Omega|} P(r; \tb - \sb) f(\sb) d\sb,
$$
where $P(r; \tb)$ denotes the Poisson kernel 
$$
   P(r; \tb) :=  \sum_{n=0}^\infty \sum_{\jb \in \JJ_n} \phi_\jb(\tb) r^n, 
           \qquad  0 \le r < 1.
$$
Just as in the classical Fourier series, the kernel is nonnegative and enjoys 
a closed expression. 

\begin{prop}
(1) The Poisson kernel $P(r; \tb)$ is nonnegative for all $\tb \in \Omega$ and
$$
       \frac{1}{|\Omega}  \int_{\Omega} P(r; \tb) d \tb =1.
$$
(2) Let $q(r,t) = 1 - 2r \cos t + r^2$. Then 
\begin{align*}
  P(r; \tb) = & \frac{ (1-r)^3(1-r^3) }
     { q\left(r,\frac{2 \pi(t_1-t_2)}{3} \right) q\left(r,\frac{2 \pi (t_2-t_3)}{3}\right) 
          q\left(r,\frac{2 \pi (t_3-t_1)}{3}\right)  } \\
            &  +  \frac{r (1-r)^2 } { q\left(r,\frac{2 \pi (t_1-t_2)}{3}\right) 
                  q\left(r,\frac{2 \pi (t_2-t_3)}{3}\right)   }  
            +  \frac{ r(1-r)^2 } {   q\left(r,\frac{2 \pi(t_2-t_3)}{3} \right) 
                 q\left(r,\frac{2 \pi (t_3-t_1)}{3}\right) }  \\
            &  +  \frac{r (1-r)^2}  {  q\left(r,\frac{2 \pi(t_3-t_1)}{3} \right)
               q\left(r,\frac{2 \pi (t_1-t_2)}{3}\right)   } .
\end{align*}
\end{prop}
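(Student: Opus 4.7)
The plan is to establish the closed form (2) first, since both assertions in (1) follow readily from it. The cornerstone is a two-fold Abel summation by parts. Because $\HH_n = \HH_{n-1} \cup \JJ_n$ disjointly, we have $\sum_{\jb \in \JJ_n}\phi_\jb = D_n - D_{n-1}$ (with the convention $D_{-1}=0$), so rearranging yields
\[
P(r;\tb) = \sum_{n=0}^\infty (D_n - D_{n-1})r^n = (1-r)\sum_{n=0}^\infty D_n(\tb)\, r^n.
\]
Applying the same trick again using $D_n = \Theta_n - \Theta_{n-1}$ from \eqref{D-kernel}, I would obtain the basic reduction
\[
P(r;\tb) = (1-r)^2 \sum_{n=0}^\infty \Theta_n(\tb)\, r^n.
\]

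The next step is to evaluate $\sum_n \Theta_n r^n$ in closed form. Set $\alpha = \tfrac{(t_1-t_2)\pi}{3}$, $\beta = \tfrac{(t_2-t_3)\pi}{3}$, $\gamma = \tfrac{(t_3-t_1)\pi}{3}$, which satisfy the crucial constraint $\alpha + \beta + \gamma = 0$. The classical identity
\[
4\sin A\sin B\sin C = -\sin 2A - \sin 2B - \sin 2C \quad \text{when}\quad A+B+C=0,
\]
applied with $A=(n+1)\alpha$, $B=(n+1)\beta$, $C=(n+1)\gamma$, converts the triple sine in the numerator of $\Theta_n(\tb)$ into a sum of three single sines. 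Combined with the elementary Abel sum $\sum_{n\ge 0}r^n\sin((n+1)\phi) = \sin\phi/q(r,\phi)$, this produces the preliminary expression
\[
\sum_{n=0}^\infty \Theta_n(\tb)\, r^n = \frac{-1}{4\sin\alpha\sin\beta\sin\gamma}\left[\frac{\sin 2\alpha}{q(r,2\alpha)} + \frac{\sin 2\beta}{q(r,2\beta)} + \frac{\sin 2\gamma}{q(r,2\gamma)}\right].
\]

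The main obstacle is the remaining algebraic repackaging. One must combine the three fractions over the common denominator $q(r,2\alpha)q(r,2\beta)q(r,2\gamma)$, multiply by $(1-r)^2$, and match the four-term decomposition in the statement. Using the same identity in the form $\sin 2\alpha + \sin 2\beta + \sin 2\gamma = -4\sin\alpha\sin\beta\sin\gamma$ to clear the prefactor, the task reduces to the polynomial identity in $r$
\[
\sum_{\text{cyc}}\sin 2\alpha\cdot q(r,2\beta)\,q(r,2\gamma) = \bigl(\sin 2\alpha + \sin 2\beta + \sin 2\gamma\bigr)\bigl[(1-r)(1-r^3) + r\bigl(q(r,2\alpha)+q(r,2\beta)+q(r,2\gamma)\bigr)\bigr],
\]
which I would verify by expanding both sides in powers of $r$ and matching coefficients, repeatedly invoking auxiliary identities such as $\cos 2\alpha + \cos 2\beta + \cos 2\gamma = 4\cos\alpha\cos\beta\cos\gamma - 1$ (valid whenever $\alpha+\beta+\gamma=0$) to simplify the symmetric expressions that arise at each order.

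Once (2) is in hand, part (1) is essentially free. Nonnegativity follows because $q(r,t)=(1-r)^2 + 2r(1-\cos t) > 0$ for $0\le r<1$, while the numerators $(1-r)^3(1-r^3)$ and $r(1-r)^2$ are manifestly nonnegative, so each of the four displayed summands is $\ge 0$. The normalization is obtained by integrating the defining series for $P(r;\tb)$ term-by-term: by the orthogonality in Theorem 2.1, $\int_\Omega \phi_\jb\, d\tb = 0$ unless $\jb=\mathbf 0$, and the only surviving term corresponds to $\mathbf 0 \in \JJ_0$ with coefficient $r^0=1$.
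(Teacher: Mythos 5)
Your proposal is correct and follows essentially the same route as the paper: the double Abel summation reducing $P(r;\tb)$ to $(1-r)^2\sum_n\Theta_n(\tb)r^n$, the product-to-sum identity for sines with $\alpha+\beta+\gamma=0$ combined with $\sum_{n\ge0}r^n\sin((n+1)\phi)=\sin\phi/q(r,\phi)$, the recombination of the numerator into $(1-r)(1-r^3)+r\sum_{\mathrm{cyc}}q$, and then nonnegativity and normalization read off from the closed form and from orthogonality, respectively. The only cosmetic difference is that you phrase the tedious middle step as verifying a polynomial identity in $r$ by matching coefficients, whereas the paper expands the combined numerator directly; these are the same computation.
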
 

\begin{proof}
The fact that the integral of $P(r;\tb)$ is 1 follows from the definition and 
the orthogonality of $\phi_\jb(t)$, whereas that $P(r;\tb) \ge 0$ is an immediate
consequence of the compact formula in the part (2). 

To prove the part (2), we start from the compact formula of $D_n(\tb)$, from which
follows readily that 
$$
  P(r;\tb) = (1-r) \sum_{n=0}^\infty D_n(\tb) r^n = (1-r)^2 \sum_{n=0}^\infty
          \Theta_n(\tb) r^n. 
$$
If $t_1+t_2+t_3 =0$ then it is easy to verify that 
\begin{align} \label{elementary} 
\begin{split}
    & \sin 2 t_1+  \sin 2 t_2+  \sin 2 t_3 =  - 4 \sin t_1 \sin t_2 \sin t_3,\\
    & \cos 2 t_1+  \cos 2 t_2+  \cos 2 t_3 =   4 \cos t_1 \cos t_2 \cos t_3 -1.
\end{split}
\end{align}
Using the first equation in \eqref{elementary} and the fact that 
$$
   \sum_{n=0}^\infty \sin (n+1) s = \frac{\sin s} {1 - 2r \cos s + r^2},
$$
we conclude then 
\begin{align*}
\sum_{n=0}^\infty \Theta_n(\tb) r^n =  & \frac{1}
    {4 \sin \frac{\pi(t_1-t_2)}{3}\sin  \frac{\pi(t_2-t_3)}{3}\sin \frac{\pi (t_3-t_1)}{3} } \\
       & \times \left[ \frac{\sin \frac{2\pi (t_1-t_2)}{3}} 
          { q\left(r,\frac{2 \pi(t_1-t_2)}{3} \right) } 
           +\frac{\sin\frac{2\pi (t_2-t_3)}{3} }{q\left(r,\frac{2 \pi (t_2-t_3)}{3}\right) }+
                  \frac{\sin\frac{2\pi(t_3-t_1)}{3}} {q\left(r,\frac{2 \pi (t_3-t_1)}{3}\right) } \right].
\end{align*}
Putting the three terms together and simplify the numerator, we conclude,
after a tedious computation,  that 
\begin{align*}
 \sum_{n=0}^\infty \Theta_n(\tb) r^n =   \frac{1 + 2 r + 2 r^3 + r^4 - 2 r^2
     \left[ \cos \frac{2 \pi(t_1 - t_2)}{3} + \cos \frac{2 \pi( t_1 - t_3)}{3} + 
          \cos \frac{2 \pi (t_2 - t_3) }{3}  \right]}
     { q\left(r,\frac{2 \pi(t_1-t_2)}{3} \right) q\left(r,\frac{2 \pi (t_2-t_3)}{3}\right) 
          q\left(r,\frac{2 \pi (t_3-t_1)}{3}\right)  }.
\end{align*}
The numerator of the right hand side can be written as 
\begin{align*}
   (1-r) (1-r^3)+ r\left[ \left(r,\tfrac{2 \pi(t_1-t_2)}{3} \right) +
       q\left(r,\tfrac{2 \pi (t_2-t_3)}{3}\right) 
        + q\left(r,\tfrac{2 \pi (t_3-t_1)}{3}\right) \right],
\end{align*}
from which the stated compact formula follows. 
\end{proof}

Next we consider the convergence of $P_r f$ as $r\to 1-$. For the 
classical Fourier series, if $P_r f$ converges to $f$ as $r\to 1-$ then the
series is called Abel summable. 

\begin{thm}
If $f$ is an $H$-periodic function, bounded on $\overline{\Omega}$ and 
continuous at $\tb \in \Omega^\circ$, then $P_r f(\tb)$ converges to $f(\tb)$ 
as $r \mapsto 1-$. 
Furthermore, if $f$ is continuous on $\overline{\Omega}$, then $P_r f$ 
converges uniformly on $\Omega$ as $r \mapsto 1-$. 
\end{thm}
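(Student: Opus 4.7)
The strategy is to realize $P_r$ as an approximate-identity operator, using the properties established in Proposition~3.1. From $P(r;\cdot)\ge 0$ and $\frac{1}{|\Omega|}\int_\Omega P(r;\sb)\,d\sb = 1$ it follows at once that $\|P_r f\|_\infty \le \|f\|_\infty$ for every bounded $H$-periodic $f$, so $P_r$ is a contraction on $L^\infty$.

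I would first establish the uniform statement. From the series defining $P(r;\cdot)$ and orthonormality of $\{\phi_\jb\}$, the direct computation $P_r \phi_\jb = r^n \phi_\jb$ whenever $\jb \in \JJ_n$ gives $P_r g \to g$ uniformly as $r \to 1-$ for every trigonometric polynomial $g \in \bigcup_n \CH_n$. Trigonometric polynomials are uniformly dense in the space of continuous $H$-periodic functions on the compact quotient torus $\RR_H^3/3\ZZ_H^3$ by Stone--Weierstrass: they form an algebra (since $\phi_\jb \phi_\kb = \phi_{\jb+\kb}$), contain the constants ($\phi_0 \equiv 1$), are closed under complex conjugation ($\overline{\phi_\jb} = \phi_{-\jb}$), and separate points of the torus. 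Given $f \in C(\overline\Omega)$ and $\ve > 0$, pick a trigonometric polynomial $g$ with $\|f-g\|_\infty < \ve$; the three-$\ve$ estimate
\[
  \|P_r f - f\|_\infty \le \|P_r(f-g)\|_\infty + \|P_r g - g\|_\infty + \|g-f\|_\infty \le 2\ve + \|P_r g - g\|_\infty
\]
then closes the uniform case once $r$ is so close to $1$ that $\|P_r g - g\|_\infty < \ve$.

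For the pointwise statement I would return to the standard approximate-identity template. Normalization gives
\[
  P_r f(\tb) - f(\tb) = \frac{1}{|\Omega|} \int_\Omega P(r;\sb)\,\bigl[f(\tb - \sb) - f(\tb)\bigr]\,d\sb,
\]
and continuity at $\tb$ supplies $\delta > 0$ with $|f(\tb - \sb) - f(\tb)| < \ve$ on $B_\delta(0)$. Splitting the integral at this ball and bounding the integrand by $2\|f\|_\infty$ outside it yields
\[
  |P_r f(\tb) - f(\tb)| \le \ve + \frac{2\|f\|_\infty}{|\Omega|} \int_{\Omega \setminus B_\delta(0)} P(r;\sb)\,d\sb.
\]
To see the last integral tends to $0$, fix a continuous $H$-periodic $\psi$ with $0 \le \psi \le 1$, $\psi(0) = 0$, and $\psi \equiv 1$ on $\Omega \setminus B_\delta(0)$; nonnegativity of $P$ and the uniform result just proved give
\[
  \frac{1}{|\Omega|} \int_{\Omega \setminus B_\delta(0)} P(r;\sb)\,d\sb \;\le\; P_r \psi(0) \;\longrightarrow\; \psi(0) = 0.
\]

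The main technical point is the Stone--Weierstrass step, which requires some care because periodicity is here measured modulo the lattice $3\ZZ_H^3$ rather than the familiar $2\pi\ZZ^2$. A more direct attempt to prove $P(r;\sb)\to 0$ uniformly on $\Omega \setminus B_\delta(0)$ would in fact fail: along the three ``resonance'' lines $t_i \equiv t_j \pmod 3$ the explicit formula of Proposition~3.1 shows that the kernel retains a nonzero pointwise limit as $r \to 1-$, though these lines contribute only $O(1-r)$ integrated mass, a subtlety the density argument above sidesteps automatically.
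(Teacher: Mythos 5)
Your proof is correct but takes a genuinely different route from the paper's. The paper attacks the pointwise statement head-on with a quantitative kernel estimate: after the standard split at a small neighborhood $\Omega_\d$ of the origin, it bounds the tail mass $\int_{\Omega\setminus\Omega_\d}P(r;\sb)\,d\sb$ by writing each term of the closed formula of Proposition 3.1(2) as a product of two one-dimensional Poisson kernels $P(r;t)=(1-r^2)/q(r,t)$, changing variables $(t_1,t_2)\mapsto(s_1,s_2)=((t_1-t_3)/3,(t_2-t_3)/3)$ so that $\Omega\setminus\Omega_\d$ lands inside $[-1,1]^2\setminus[-\d/6,\d/6]^2$, and invoking the classical bound $P(r;t)\le c(1-r)/t^2$; this yields the explicit rate $O((1-r)/\d)$ for the tail mass, and the uniform statement is read off the same estimate. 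You instead run a soft argument: positivity and unit mass (Proposition 3.1(1)) make $P_r$ a contraction, the eigenrelation $P_r\phi_\jb=r^n\phi_\jb$ for $\jb\in\JJ_n$ handles trigonometric polynomials, Stone--Weierstrass on the quotient torus $\RR_H^3/3\ZZ_H^3$ gives density (the separation-of-points check is the one place needing care, and it goes through because $(1,-1,0)$ and $(0,1,-1)$ generate $\ZZ_H^3$, so $\jb\cdot(\tb-\sb)\in3\ZZ$ for all $\jb$ forces $\tb\equiv\sb\bmod 3$), and the pointwise case is then bootstrapped from the uniform one by testing against a Urysohn function $\psi$ vanishing at $0$ and equal to $1$ off $B_\d(0)$ --- here you should record that $P(r;\cdot)$ is even (each $q$ is even in its angular argument), so $P_r\psi(0)=\frac{1}{|\Omega|}\int_\Omega P(r;\sb)\psi(\sb)\,d\sb$ does dominate the tail mass. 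The paper's route buys a rate and an explicit localization of the kernel, of the kind reused in the Ces\`aro analysis of Section 3.2; yours is shorter, uses only the qualitative content of Proposition 3.1 together with completeness of the exponentials, and transfers verbatim to any positive, mass-one summability method. Your closing observation is also correct and worth keeping: along the lines $t_i=t_j$ the kernel does not tend to zero pointwise as $r\to1-$ (the terms with $q(r,2\pi(t_i-t_j)/3)=(1-r)^2$ in the denominator have a nonzero limit), so the naive approximate-identity criterion that $P(r;\cdot)\to0$ uniformly off a neighborhood of $0$ fails here, and only the integrated tail mass decays --- a point both your argument and the paper's integral estimate handle correctly.
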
 

\begin{proof} 
Since $f$ is continuous at $\tb \in \Omega^\circ$. For any $\ve > 0$, choose 
$\d > 0$ such that 
$$
    |f (\tb- \sb) - f(\tb)| < \ve \qquad \hbox{whenever $\sb \in 
          \Omega_\d: = \{\sb \in \Omega: |s_i| \le \d, \, 1 \le i \le 3\}$}. 
$$
Since $P(r; \sb)$ has a unit integral, it follows that 
\begin{align*}
|P_r f(\tb) - f(\tb) |& \le \ve \int_{\Omega_\d}P(r;\sb) d\sb 
 + 2 \int_{\Omega \setminus \Omega_\d}   |f (\tb- \sb) - f(\tb)|P(r;\sb) d\sb\\
   & \le \ve  + 2 \|f\|_\infty \int_{\Omega \setminus \Omega_\d}P(r;\sb) d\sb.
\end{align*}
Thus, it suffices to show that the last integral goes to 0 as $r \mapsto 1+$. 
Since $q(r,t) = (1-r)^2 + 2 r \sin^2 \frac{t}{2} \ge (1-r)^2$, the closed formula
of the Poisson kernel shows that $P_r(\tb)$ is bounded by 
\begin{align*}
  P(r; \tb) &  \le   \frac{2 (1-r)^2 } { q\left(r,\frac{2 \pi (t_1-t_2)}{3}\right) 
                  q\left(r,\frac{2 \pi (t_2-t_3)}{3}\right)   }  
            +  \frac{ 2(1-r)^2 } {   q\left(r,\frac{2 \pi(t_2-t_3)}{3} \right) 
                 q\left(r,\frac{2 \pi (t_3-t_1)}{3}\right) }  \\
            &  +  \frac{2 (1-r)^2}  {  q\left(r,\frac{2 \pi(t_3-t_1)}{3} \right)
               q\left(r,\frac{2 \pi (t_1-t_2)}{3}\right)   } .
\end{align*}
Clearly we only need to consider one of the three terms in the right hand side,
say the second one, so that the essential task is reduced to show that, as
$q(r,t)$ is even in $t$,  
\begin{equation} \label{limit1}
   \lim_{r \to 1+} \int_{\Omega \setminus \Omega_\d} \frac{ (1-r)^2 } 
   { q\left(r,\frac{2 \pi (t_1-t_3)}{3}\right) q\left(r,\frac{2 \pi (t_2-t_3)}{3}\right)} d\sb
       =0. 
\end{equation}
Setting $P(r; t): = \frac{1-r^2}{q(r,t)}$ for $- \pi \le t \le \pi$. Then $P(r;t)$ is the 
Poisson kernel for the classical Fourier series. It is known \cite{Z} that 
\begin{equation} \label{limit2}
     \int_{-\pi}^\pi P(r;t) dt =1 \qquad\hbox{and}\qquad
        0 \le P(r;t) \le  c \frac{1-r} {t^2},
\end{equation} 
where $c$ is a constant independent of $r$ and $t$. Evidently, the kernel in 
the left hand side of \eqref{limit1} can be written as 
$$
    \frac{ (1-r)^2 } 
      { q\left(r,\frac{2 \pi (t_1-t_3)}{3}\right) q\left(r,\frac{2 \pi (t_2-t_3)}{3}\right)}
        =\frac{1}{(1+r)^2} P\left(r;\tfrac{2 \pi (t_1-t_3)}{3}\right) 
                     P\left(r,\tfrac{2 \pi (t_2-t_3)}{3}\right).
$$
Recall that $t_1 + t_2 + t_3 =0$ for $\tb = (t_1,t_2,t_3) \in \Omega$. It is easy
to see that $\tb \in \overline{\Omega}$ means that $-1 \le t_1, t_2, t_1-t_2 \le 1$.
Let 
\begin{equation} \label{t-s}
s_1 := \frac{t_1 - t_3}{3} = \frac{2t_1 + t_2} {3} , \qquad s_2 := \frac{t_2 - t_3}{3}
  = \frac{t_1 +2 t_2} {3}.
\end{equation}
A simple geometric consideration shows that the image of the domain 
$\Omega_\d$ under the affine mapping $(t_1,t_2) \mapsto (s_1,s_2)$ 
in \eqref{t-s} contains the square $[-\d/6,\d/6]^2$.  Consequently, the image 
of $\overline{\Omega} \setminus \Omega_\d$ is a subset of 
$[-1,1]^2 \setminus [-\d/6,\d/6]^2$. Hence, changing variables $(t_1,t_2)
 \mapsto (s_1,s_2)$, we obtain by \eqref{limit2} that  
\begin{align*}
 & \int_{\Omega \setminus \Omega_\d} \frac{ (1-r)^2 } 
    {q\left(r,\frac{2 \pi (t_1-t_3)}{3}\right) q\left(r,\frac{2 \pi (t_2-t_3)}{3}\right)} d\tb \\
 & \qquad\qquad
   \le    \frac{3}{(1+r)^2} \int_{[-1,1]^2 \setminus [-\d/6,\d/6]^2}
      {P\left(r,2 \pi s_1\right) P\left(r,2 \pi s_2\right)} d\sb \\
 & \qquad\qquad \le 3 \int_{1 \ge |s| \ge \delta/6} P(r, 2 \pi s)ds \le c \frac{1-r}{\d},   
\end{align*}
which converges to zero when $r \to 1-$.  This proves \eqref{limit1} and the 
convergence of $P_r f(\tb)$ to $f(\tb)$. Clearly it also proves the uniform 
convergence of $P_rf$ to $f \in C(\overline{\Omega})$.  
\end{proof}

\subsection{Ces\`aro summability}
Let us denote by $S_n^\d f$ and $K_n^\d$ the Ces\`aro $(C,\d)$ means of the 
hexagonal Fourier series  and its kernel, respectively.  Then
$$
   S_n^\d f(\tb) :=  \int_{\Omega} f(\sb) K_n^{(\d)}(tb - \sb) d\sb, 
$$
where 
$$
 K_n^{(\d)}(\tb):= \frac{1}{A_n^\d} 
     \sum_{k=0}^n A_{n-k}^\d \sum_{\jb \in \JJ_n} \phi_\jb(\tb), \qquad
        A_n^\d = \binom{n+\d}{\d}.
$$
It is evident that the case $\d =0$ corresponds to $S_n f$ and $D_n(\tb)$,
respectively. By \eqref{D-kernel} and \eqref{Theta}, the $(C,1)$ kernel
is given by
$$
   K_n^{(1)}(\tb) = \frac{1}{n} \Theta_n(\tb) = \frac{1}{n} 
       \frac{\sin \frac{(n+1) (t_1-t_2)\pi}{3}\sin \frac{(n+1) (t_2-t_3)\pi}{3}
  \sin \frac{(n+1) (t_3-t_1)\pi}{3}}
    {\sin \frac{(t_1-t_2)\pi}{3}\sin \frac{(t_2-t_3)\pi}{3}\sin \frac{(t_3-t_1)\pi}{3}}.
$$
For the classical Fourier series in one variable, the $(C,1)$ kernel is
the well-known Fej\`er kernel, which is given by
$$
       \frac{1}{n+1}  \sum_{k=0}^n D_k(t) = \frac{1}{n+1} 
           \left(\frac{\sin \frac{(n+1)t}{2}}{\sin \frac{t}{2} } \right)^2
$$
and is nonnegative in particular. For the hexagonal Fourier series, it turns 
out that $K_n^{(2)}$ is nonnegative. 

\begin{lem} For $n \ge 0$, 
$$
    \binom{n+2}{2} K_n^{(2)}(\tb) = \frac{1}{16}
    \frac{A_n(\tb)^2 + B_n(\tb)^2} {\left(\sin \frac{(t_1-t_2)\pi}{3}\right)^2
     \left( \sin  \frac{(t_2-t_3)\pi}{3}\right)^2\left(\sin \frac{(t_3-t_1)\pi}{3}\right)^2},       
$$
where
\begin{align*}
 A_n(\tb) &\, : =   \cos\tfrac{n t_1 \pi}{3} \sin \tfrac{(n+2) (t_2 - t_3)\pi}{3} + 
      \cos\tfrac{n t_2 \pi}{3} \sin \tfrac{(n+2) (t_3-t_1)\pi}{3} 
      + \cos\tfrac{n t_3\pi}{3} \sin\tfrac{(n+2) (t_1 - t_2)\pi}{3}, \\
 B_n(\tb) &\, : = \sin\tfrac{n t_1 \pi}{3} \sin \tfrac{(n+2) (t_2 - t_3)\pi}{3} + 
      \sin\tfrac{n t_2 \pi}{3} \sin \tfrac{(n+2) (t_3-t_1)\pi}{3}+ \sin\tfrac{n t_3\pi}{3} \sin\tfrac{(n+2) (t_1 - t_2)\pi}{3}.       
\end{align*}
\end{lem}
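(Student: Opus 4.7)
The plan proceeds in three stages: a double Abel summation to reduce $\binom{n+2}{2}K_n^{(2)}$ to $\sum_{k=0}^{n}\Theta_k$, a closed-form evaluation of this sum using the identity \eqref{elementary}, and a final trigonometric identification with $A_n^{2}+B_n^{2}$.

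For the first stage, the ring decomposition $\sum_{\jb\in\JJ_k}\phi_\jb = D_k - D_{k-1}$ combined with two successive Abel summations and the binomial identity $A_n^\delta - A_{n-1}^\delta = A_n^{\delta-1}$ will give, after telescoping via \eqref{D-kernel} with the convention $D_{-1}=\Theta_{-1}=0$,
\begin{equation*}
\binom{n+2}{2}K_n^{(2)}(\tb) \;=\; \sum_{k=0}^{n}(n-k+1)\,D_k(\tb) \;=\; \sum_{k=0}^{n}\Theta_k(\tb).
\end{equation*}

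For the second stage, abbreviate $\alpha_j = (t_j-t_{j+1})\pi/3$ cyclically, so that $\alpha_1+\alpha_2+\alpha_3=0$. Applying the first identity in \eqref{elementary} at $u_j = (k+1)\alpha_j$ rewrites the numerator of $\Theta_k$ from \eqref{Theta} as $-\tfrac14\sum_j\sin(2(k+1)\alpha_j)$. Summing in $k$ via the Dirichlet-type formula $\sum_{m=1}^{n+1}\sin(2m\alpha) = \sin((n+1)\alpha)\sin((n+2)\alpha)/\sin\alpha$ and clearing denominators produces
\begin{equation*}
\sum_{k=0}^{n}\Theta_k(\tb) \;=\; -\frac{1}{4\bigl(\sin\alpha_1\sin\alpha_2\sin\alpha_3\bigr)^{2}}\sum_{j=1}^{3}\sin((n+1)\alpha_j)\sin((n+2)\alpha_j)\sin\alpha_{j+1}\sin\alpha_{j+2}.
\end{equation*}

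The third stage, and the main obstacle, is the purely trigonometric identity
\begin{equation*}
A_n^{2}+B_n^{2} \;=\; -4\sum_{j=1}^{3}\sin((n+1)\alpha_j)\sin((n+2)\alpha_j)\sin\alpha_{j+1}\sin\alpha_{j+2}.
\end{equation*}
I would write $A_n^{2}+B_n^{2} = |A_n+iB_n|^{2}$ with $A_n+iB_n = \sum_{k=1}^{3}e^{int_k\pi/3}\sin((n+2)\alpha_{k+1})$; expanding the squared modulus gives the diagonal sum $\sum_k\sin^{2}((n+2)\alpha_k)$ together with three cross terms $2\cos(n\alpha_j)\sin((n+2)\alpha_{j+1})\sin((n+2)\alpha_{j+2})$, using $n(t_j-t_k)\pi/3 = \pm n\alpha_{?}$ on each off-diagonal pair. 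Applying $2\sin A\sin B = \cos(A-B)-\cos(A+B)$ to both sides, expanding, and simplifying via $\alpha_1+\alpha_2+\alpha_3=0$ together with both identities in \eqref{elementary} should collapse everything onto a single match. A tidier alternative is induction on $n$: the case $n=0$ gives $A_0 = -4\sin\alpha_1\sin\alpha_2\sin\alpha_3$, $B_0 = 0$ by \eqref{elementary}, matching $\binom{2}{2}K_0^{(2)}=1=\Theta_0$, while the inductive step reduces to verifying $A_{n+1}^{2}+B_{n+1}^{2}-A_n^{2}-B_n^{2} = 16\bigl(\sin\alpha_1\sin\alpha_2\sin\alpha_3\bigr)^{2}\Theta_{n+1}(\tb)$, a finite trigonometric identity of simpler combinatorial shape.
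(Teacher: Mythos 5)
Your proposal follows essentially the same route as the paper: reduce $\binom{n+2}{2}K_n^{(2)}$ to $\sum_{k=0}^n\Theta_k$, convert the numerator of each $\Theta_k$ via the first identity in \eqref{elementary}, sum in $k$ with the formula $\sum_{k=0}^n\sin[2(k+1)s]=\sin[(n+1)s]\sin[(n+2)s]/\sin s$, and then recognize the resulting numerator as $\tfrac1{16}(A_n^2+B_n^2)$. You supply slightly more detail than the paper at the two ends (the double Abel summation, which the paper leaves implicit, and the $|A_n+iB_n|^2$ expansion or induction for the sum-of-squares identity, which the paper dismisses as a tedious verification), and your constants are the consistent ones, but the argument is the same.
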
 

\begin{proof}
Using \eqref{elementary} and the elementary formula 
$$
   \sum_{k=0}^n \sin [2 (k+1) s] = \frac{\sin [(n+1)s] \sin[(n+2)s]}{\sin s},
$$
we obtain
$$
 \sum_{k=0}^n \Theta_k(\tb) =  
  \frac{- 4 E_n(\tb)}
   {\left(\sin \frac{(t_1-t_2)\pi}{3}\right)^2
     \left( \sin  \frac{(t_2-t_3)\pi}{3}\right)^2\left(\sin \frac{(t_3-t_1)\pi}{3}\right)^2} .
$$
where
\begin{align*}
E_n(\tb) := & \sin \tfrac{(n+1) (t_1-t_2)\pi}{3} \sin \tfrac{(n+2) (t_1-t_2)\pi}{3}
     \sin  \tfrac{(t_2-t_3)\pi}{3} \sin \tfrac{(t_3-t_1)\pi}{3} \\
    + & \sin \tfrac{(n+1) (t_2-t_3)\pi}{3}\sin \tfrac{(n+2) (t_2-t_3)\pi}{3}
    \sin  \tfrac{(t_1-t_2)\pi}{3} \sin \tfrac{(t_3-t_1)\pi}{3} \\
    + & \sin \tfrac{ (n+1) (t_3-t_1)\pi}{3}\sin \tfrac{ (n+2) (t_3-t_1)\pi}{3} 
      \sin  \tfrac{(t_1-t_2)\pi}{3} \sin \tfrac{(t_2-t_3)\pi}{3}.
\end{align*}
From here, the difficulty lies in identifying that the numerator is a sum of 
squares. Once the form is recognized, the verification that $- 4 E_n(\tb) 
 = \frac{1}{16}(( A_n(\tb)^2 + B_n(\tb)^2)$ is a straightforward, though tedious, exercise.  
\end{proof}

An immediate consequence of the above lemma is the following: 

\begin{thm}
The $(C,2)$ means of the Fourier expansion with respect to the hexagon 
domain is a positive linear opreator. 
\end{thm}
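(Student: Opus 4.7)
The plan is to derive this theorem as an essentially immediate corollary of Lemma 3.2. Linearity of the operator $S_n^2$ is built into its definition as convolution against the fixed kernel $K_n^{(2)}$, so the only substantive claim is positivity: that $f \ge 0$ on $\Omega$ (together with $H$-periodicity) forces $S_n^2 f \ge 0$ on $\Omega$. This is equivalent to pointwise nonnegativity of the kernel $K_n^{(2)}$.

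I would invoke the explicit representation
\begin{equation*}
\binom{n+2}{2} K_n^{(2)}(\tb) = \frac{1}{16}\cdot \frac{A_n(\tb)^2 + B_n(\tb)^2}{\left(\sin\tfrac{(t_1-t_2)\pi}{3}\right)^2 \left(\sin\tfrac{(t_2-t_3)\pi}{3}\right)^2 \left(\sin\tfrac{(t_3-t_1)\pi}{3}\right)^2}
\end{equation*}
from Lemma 3.2 and read off the sign by inspection: the denominator is a product of three real squares, the numerator is a sum of two real squares $A_n(\tb)^2 + B_n(\tb)^2$, and $\binom{n+2}{2} > 0$. At points where one of the sines vanishes, $K_n^{(2)}$ must be interpreted as the limiting value of the corresponding trigonometric polynomial, which is continuous as a finite exponential sum; so $K_n^{(2)}(\tb) \ge 0$ extends to all of $\Omega$ by continuity. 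Then for any nonnegative $H$-periodic $f$, the invariance identity \eqref{IntPeriod} yields
\begin{equation*}
S_n^2 f(\tb) = \int_\Omega f(\sb)\, K_n^{(2)}(\tb-\sb)\, d\sb \ge 0,
\end{equation*}
which is the desired positivity.

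There is no real obstacle at this stage; the entire difficulty of the theorem is absorbed into Lemma 3.2, where one must first recognize and then verify the sum-of-squares identity $-4 E_n(\tb) = \tfrac{1}{16}\bigl(A_n(\tb)^2 + B_n(\tb)^2\bigr)$ by trigonometric manipulation. Once that lemma is available, Theorem 3.3 is a one-line deduction: nonnegative kernel $\Rightarrow$ positive operator.
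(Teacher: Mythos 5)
Your proposal is correct and follows exactly the paper's route: the paper states this theorem as an immediate consequence of the preceding lemma, whose sum-of-squares formula makes the kernel $K_n^{(2)}$ manifestly nonnegative, and positivity of the convolution operator follows. Your added remarks about linearity, the removable singularities where the sine factors vanish, and the use of \eqref{IntPeriod} are sound elaborations of what the paper leaves implicit.
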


As a comparison, let us mention that for the usual Fourier series on the tours, if
the partial sums are defined with respect to $\ell^\infty$ ball, that is, if the 
Dirichlet kernel is defined as 
$$  
  D_n(\t_1,\t_2)= \sum_{- n \le k_1,k_2\le n} e^{i(k_1 \t_1 + k_2 \t_2)}, \quad 
     (\t_1,\t_2) \in [-\pi,\pi]^2,
$$ 
then it is proved in \cite{BX} that the corresponding $(C,3)$ means are nonnegative 
and $\delta =3$ is sharp. In fact, the results in \cite{BX} are established for the 
partial sums defined with respect to the $\ell^1$ ball for $d$-dimensional torus. For 
$d=2$, it is easy to see that the result on $\ell^1$ ball implies the result on $\ell^\infty$ 
ball. 

If the $(C,\d)$ means converge to $f$, then so is $(C,\d')$ means for 
$\d' > \d$. The positivity of the kernel  shows immediately that $(C,2)$ means of the 
hexagonal Fourier series converges. It turns out that the $(C,1)$ means
is enough. 

\begin{thm}  \label{thm:(C,1)}
If $f \in C(\overline{\Omega})$, then the $(C,1)$ means $S_n^{(1)} f$ 
converge uniformly to $f$ in $\overline{\Omega}$. 
\end{thm}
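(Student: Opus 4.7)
The plan is the classical Banach--Steinhaus argument: convergence on a dense subspace plus uniform boundedness of the operators $S_n^{(1)}$ on $C(\overline{\Omega})$.

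For the dense subspace, $\bigcup_k \CH_k$ is dense in $C(\overline{\Omega})$ by Theorem 2.1 and a Stone--Weierstrass-type argument (the $\phi_\jb$ separate points, form a unital conjugation-closed algebra, and their span is $L^2$-dense). For any $p = \sum_{m=0}^k p_m$ with $p_m \in \sspan\{\phi_\jb : \jb \in \JJ_m\}$, a direct computation gives
\[
S_n^{(1)} p = \sum_{m=0}^k \frac{n-m+1}{n+1}\, p_m \xrightarrow[n\to\infty]{} p
\]
uniformly, since $(n-m+1)/(n+1) \to 1$ for each fixed $m \le k$.

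The main step is the uniform bound $\sup_n \|K_n^{(1)}\|_{L^1(\Omega)} < \infty$. My approach would exploit the standard Cesàro identity
\[
A_n^{(1)} K_n^{(1)} = A_n^{(2)} K_n^{(2)} - A_{n-1}^{(2)} K_{n-1}^{(2)},
\]
together with Lemma 3.3, which after cancelling the common denominator yields
\[
(n+1)\, K_n^{(1)}(\tb) = \frac{\mathcal{P}_n(\tb) - \mathcal{P}_{n-1}(\tb)}{16\left(\sin\tfrac{(t_1-t_2)\pi}{3}\sin\tfrac{(t_2-t_3)\pi}{3}\sin\tfrac{(t_3-t_1)\pi}{3}\right)^2}
\]
with $\mathcal{P}_n := A_n^2 + B_n^2$. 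Factoring the numerator as $(A_n - A_{n-1})(A_n + A_{n-1}) + (B_n - B_{n-1})(B_n + B_{n-1})$ and applying product-to-sum identities to the difference factors $A_n - A_{n-1}$ and $B_n - B_{n-1}$ should collapse them to single trigonometric expressions whose zeros match, up to sufficient order, the zeros of the denominator along the diagonals $t_i = t_j$. If such a bound $\|\mathcal{P}_n - \mathcal{P}_{n-1}\|_{L^1(\Omega, \mathrm{denom}^{-2}d\tb)} = O(n)$ is achieved, then $\|K_n^{(1)}\|_{L^1(\Omega)} = O(1)$.

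The main obstacle is precisely this uniform $L^1$ estimate: the $1/(n+1)$ damping must exactly balance the growth of $\mathcal{P}_n - \mathcal{P}_{n-1}$ against the sixth-order singularity of the denominator, and verifying this requires a careful analysis of the zeros of $A_n$ and $B_n$ along the three singular lines. Once the uniform bound is in hand, a standard three-$\epsilon$ argument finishes the proof: for $f \in C(\overline{\Omega})$ and any polynomial approximant $p$,
\[
\|S_n^{(1)} f - f\|_\infty \le (1 + C)\|f - p\|_\infty + \|S_n^{(1)} p - p\|_\infty,
\]
and choosing $p$ close to $f$ and then letting $n \to \infty$ yields uniform convergence.
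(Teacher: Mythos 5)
Your overall framework is the right one, and it is exactly what the paper dismisses as ``a standard argument'': density of $\bigcup_k \CH_k$ among continuous $H$-periodic functions, the exact formula $S_n^{(1)}p=\sum_{m\le k}\tfrac{n-m+1}{n+1}\,p_m$ on that dense set, and a three-$\epsilon$ argument once $\sup_n \|K_n^{(1)}\|_{L^1(\Omega)}<\infty$ is established. But that uniform $L^1$ bound \emph{is} the entire content of the theorem, and you have not proved it: you explicitly label it ``the main obstacle'' and say the cancellations ``should collapse.'' That is a genuine gap, not a routine verification left to the reader.

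Moreover, the route you propose for closing the gap is needlessly hard. You do not need the $(C,2)$ kernel or Lemma 3.3 at all: by \eqref{D-kernel} the $(C,1)$ kernel already has the closed form $K_n^{(1)}=\tfrac{1}{n+1}\Theta_n$, where $\Theta_n$ in \eqref{Theta} is an explicit \emph{product} of three one-dimensional Dirichlet-type ratios, so the required bound is simply $\int_\Omega |\Theta_n(\tb)|\,d\tb\le c\,n$. This is what the paper proves, and the product structure is what makes it tractable: after the linear change of variables \eqref{t-s} and a rotation, the integrand becomes a product of factors $\sin((n+1)\pi v)/\sin(\pi v)$ in the three linear forms $u_1+u_2$, $u_1-u_2$, $u_2$, and one splits the domain into the region where the relevant variables are $O(1/n)$ (bound each factor by $n+1$) and the regions where they are not (bound those factors by $c/v$ and integrate), each piece contributing $O(n)$. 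Your expression $(\mathcal{P}_n-\mathcal{P}_{n-1})/16\bigl(\sin\tfrac{(t_1-t_2)\pi}{3}\sin\tfrac{(t_2-t_3)\pi}{3}\sin\tfrac{(t_3-t_1)\pi}{3}\bigr)^2$ has no such factorization; the differences $A_n-A_{n-1}$ and $B_n-B_{n-1}$ mix the frequencies $n-1$, $n$, $n+1$, $n+2$ and do not telescope into a single trigonometric factor vanishing to the right order on the three singular lines, and $\mathcal{P}_n-\mathcal{P}_{n-1}$ is not of one sign, so no positivity can be exploited. Even granting your algebraic hope, you would still have to carry out the same singular-line $L^1$ analysis, only on a messier integrand. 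In short: the skeleton is correct and matches the paper's intended outline, but the load-bearing estimate is missing, and the detour through the sum-of-squares formula for $K_n^{(2)}$ makes it harder, not easier, to supply.
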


\begin{proof} 
A standard argument shows that it suffices to prove that $S_n^{(1)}$ 
is a bounded operator, which amounts to show that 
$$
   I_n: = \int_\Omega \left| \Theta_n(\tb) \right| d\tb \le c\, n
$$
Since $\Theta_n(\tb)$ is a symmetric function in $t_1,t_2,t_3$, we
only need to consider the integral over the triangle
$$
\Delta: = \{\tb \in \RR_\HH:  0 \le t_1,t_2,-t_3 \le 1\} =
     \{(t_1,t_2): t_1 \ge 0, t_2 \ge 0, t_1+t_2 \le 1\},
$$
which is one of the six triangle in $\Omega$ (see Figure 1). Let 
$s_1,s_2$ be defined as in \eqref{t-s} and let $\Delta^*$ denote the 
image of $\Delta$ in $(s_1,s_2)$ plane. Then 
$$
\wt \Delta =\{(s_1,s_2): 0 \le s_1 \le 2 s_2, 0 \le s_2 \le 2 s_1, 
   s_1+s_2 \le 1\}
$$
and it follows, as the Jacobian of the change of variables is equal to 3, 
that 
$$
  I_n = 3 \int_{\wt \Delta} \left| \frac{\sin((n+1)\pi s_1) \sin( (n+1)\pi s_2)
       \sin ((n+1)\pi(s_1-s_2))}{\sin (\pi s_1) \sin (\pi  s_2)
         \sin(\pi  (s_1+s_2))} \right| ds_1ds_2.
$$
Since the integrant in the right hand side is clearly a symmetric function
of $(s_1,s_2)$, it is equal to twice of the integral over half of the 
$\wt \Delta$, say over
$$
\wt \Delta^* =\{(s_1,s_2) \in \wt \Delta: s_1 \le s_2\} 
=\{(s_1,s_2): s_1 \le s_2 \le 2 s_1,  s_1+s_2 \le 1\}. 
$$
Making another change of variables $s_1 = (u_1-u_2)/2$ 
and $s_2 = (u_1+u_2)/2$, the domain $\wt \Delta^*$ becomes
$\Gamma: = \{(u_1,u_2): 0 \le u_2 \le u_1/3, 0 \le u_1 \le 1\}$
and we conclude that
\begin{align*}
I_n  = 3 \int_{\Gamma} \left| \frac{\sin \frac{(n+1)(u_1+u_2)\pi}{2}
  \sin \frac{(n+1)(u_1-u_2)  \pi}{2} \sin\frac{(n+1)u_2 \pi}{2}}
    {\sin \frac{(u_1+u_2)\pi}{2} \sin\frac{(u_1-u_2)\pi}{2}
         \sin \frac{u_2\pi}{2}} \right| du_1 du_2 :=3 \int_{\Gamma} |\Theta_n^*(u)|du
\end{align*}
To estimate the last integral, we partition $\Gamma$ as $\Gamma = \Gamma_1 \cup \Gamma_2 \cup \Gamma_3$ and consider the three cases separately. 

\medskip\noindent
{\it Case 1.}  $\Gamma_1 =\{u \in \Gamma: u_1 \le 3/n\}$.  Using the fact
that $|\sin n t / \sin t| \le n$, we obtain
$$
   \int_{\Gamma_1}  |\Theta_n^*(u)|du 
         \le (n+1)^3 \int_{\Gamma_1} du_1 du_2 = (n+1)^3 \frac{3n^2}{2} \le c\, n.
$$

\medskip\noindent
{\it Case 2.} $\Gamma_2 =\{u \in \Gamma: 3/ n \le u_1, \, u_2\le 1/n\}$. 
In this region we have $u_1 - u_2 \ge 2 u_1 /3$ and $u_1 + u_2 \ge u_1$.
Hence, upon using $\sin u \ge (2/\pi) u$, we obtain
$$
   \int_{3/n}^1 \left| \frac{\sin \frac{(n+1)(u_1+u_2)\pi}{2}
  \sin \frac{(n+1)(u_1-u_2)  \pi}{2}  }
    {\sin \frac{(u_1+u_2)\pi}{2} \sin\frac{(u_1-u_2)\pi}{2}} \right|  du_1
      \le \int_{3/n}^1 \frac{1}{u_1^2} du_1 \le c\, n. 
$$
Consequently, it follows that 
$$
 \int_{\Gamma_2}  |\Theta_n^*(u)|du = \int_0^{1/n} \int_{3/n}^1|\Theta_n^*(u)|du 
    \le c \,n \int_{0}^{1/n} \left| \frac{\sin\frac{(n+1)u_2 \pi}{2}} {\sin \frac{u_2\pi}{2}} \right|
   \le c \, n
$$
upon using $|\sin n u / \sin u| \le n$ again. 

\medskip\noindent
{\it Case 3.} $\Gamma_3 =\{u \in \Gamma: 3/n \le u_, \, u_2 \ge 1/n\}$. 
In this region we have $u_1 \ge 3 u_2$, which implies that 
$u_1 - u_2 \ge (2/3) u_1$ and $u_1 + u_2 \ge u_1$. Thus, using 
$\sin u \ge (2/\pi) u$ again, we obtain
$$
   \int_{3 u_2}^1 \left| \frac{\sin \frac{(n+1)(u_1+u_2)\pi}{2}
       \sin \frac{(n+1)(u_1-u_2)  \pi}{2}  }
        {\sin \frac{(u_1+u_2)\pi}{2} \sin\frac{(u_1-u_2)\pi}{2}} \right|  du_1
  \le c \int_{3u_2}^1 \frac{1}{u_1^2} du_1 \le  \frac{c}{u_2}. 
$$
Consequently, using $\sin u_2 \ge (2/\pi)u_2$, we conclude that 
$$
 \int_{\Gamma_3}  |\Theta_n^*(u)|du = \int_0^{1/n} \int_{3/n}^1|\Theta_n^*(u)|du 
      \le c \int_{1/n}^{1/3} \frac{1}{u_2^2} d u_2   \le c \, n.
$$
Putting these estimates together completes the proof. 
\end{proof}

This theorem shows that $(C,\d)$ summability of the hexagonal Fourier series 
behaviors just like that of classical Fourier series. In particular, we naturally
conjecture that the $(C,\delta)$ means of the hexagonal Fourier series should 
converge if $\d > 0$. This condition is sharp as $\d =0$ corresponds to $S_nf$ 
whose norm is known to be in the order of $(\log n)^2$ (\cite{P, Sun}). 

\section{Best Approximation on Hexagon}
\setcounter{equation}{0}

For $1 \le p \le \infty$ we define $L^p$ space to be the space of Lebesgue 
integrable $H$-periodic functions on $\Omega$ with the norm 
$$ 
 \|f\|_p : = \left(\int_{\Omega} |f(\tb)|^p d \tb \right)^{1/p}, \qquad 1 \le p < \infty,
$$
and we assume that $L^p$ is $C(\overline{\Omega})$ when $p = \infty$, with 
the uniform norm on $\overline{\Omega}$. For $f \in L^p$, we define the error 
of best approximation to $f$ from $\CH_n$ by 
$$
   E_n (f)_p: = \inf_{ S\in \CH_n} \|f- S\|_p.
$$
We shall prove the direct and inverse theorems using a modulus of smoothness. 

\subsection{A simple fact}
We start with an observation that $E_n (f)$ can be related to the error of best
approximation by trigonometric polynomials of two variables on $[-1,1]^2$. To 
see this, let us denote by $\CT_n$ the space of trigonometric polynomials of 
two variables of degree $n$ in each variable, whose elements are of the form
$$
   T (u) = \sum_{k_1=0}^n \sum_{k_2=0}^n b_k \e^{2\pi i (k_1 u_1 + k_2 u_2)}. 
$$
Furthermore, if $f \in C([-1,1]^2)$ and it is $2\pi$ periodic in both variables, then
define
$$
 \CE_n(f) : = \inf_{T \in \CT_n}  \max_{u \in [-1,1]^2} |f(u) - T(u)|.
$$

\begin{prop}
Let $f$ be $H$-periodic and continuous over the regular hexagon 
$\overline{\Omega}_H$. Assume that $f^*$ is a continuous extension of $f$ 
on $[-1,1]^2$. Then 
$$
  E_n (f) \le  \CE_{\lfloor \frac{n}{2} \rfloor} (f^*). 
$$
\end{prop}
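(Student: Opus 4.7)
The plan is to push the best trigonometric approximation of $f^*$ on $[-1,1]^2$ back to the hexagon via the affine change of variables already used in \eqref{t-s}. The point is that this substitution converts the exponential basis on $[-1,1]^2$ into the hexagonal basis $\{\phi_\jb\}$ with a controlled loss of ``degree.''

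The first step is a direct computation of the basis correspondence. For $(k_1,k_2)\in\ZZ^2$, set $\jb := (k_1,k_2,-k_1-k_2) \in \ZZ_H^3$. Using $t_3 = -t_1-t_2$,
\[
\phi_\jb(\tb) = \e^{\frac{2\pi i}{3}\jb\cdot\tb} = \e^{\frac{2\pi i}{3}[(2k_1+k_2)t_1 + (k_1+2k_2)t_2]} = \e^{2\pi i (k_1 s_1 + k_2 s_2)},
\]
where $s_1 = (2t_1+t_2)/3$ and $s_2 = (t_1+2t_2)/3$ are exactly as in \eqref{t-s}. Thus evaluating any element of $\CT_m$ at $(u_1,u_2) = (s_1(\tb), s_2(\tb))$ returns a linear combination of $\phi_\jb$'s indexed as above.

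The second step is the index check. Given $0 \le k_1, k_2 \le m$, the associated $\jb$ satisfies $|j_1|,|j_2| \le m$ and $|j_3| = k_1 + k_2 \le 2m$, so $\jb \in \HH_n$ whenever $2m \le n$, i.e., whenever $m = \lfloor n/2 \rfloor$. Therefore, if $T \in \CT_m$ is a best uniform approximation to $f^*$ on $[-1,1]^2$, the pullback
\[
\Pi(\tb) := T\!\left(\tfrac{2t_1+t_2}{3},\, \tfrac{t_1+2t_2}{3}\right)
\]
lies in $\CH_n$. Since the map $\tb \mapsto (s_1,s_2)$ is an affine injection sending $\overline\Omega$ into $[-2/3,2/3]^2 \subset [-1,1]^2$, and $f^*$ extends $f$ continuously, we have $f(\tb) = f^*(s_1(\tb),s_2(\tb))$ on $\overline\Omega$, and therefore
\[
E_n(f) \le \|f-\Pi\|_\infty = \sup_{\tb\in\overline\Omega}|f^*(s_1,s_2) - T(s_1,s_2)| \le \sup_{u\in[-1,1]^2}|f^*(u)-T(u)| = \CE_m(f^*).
\]

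The proof is essentially bookkeeping; the only subtle step is picking the correct substitution. Using exactly the $s_1, s_2$ of \eqref{t-s} is what makes the square's exponential basis align with $\{\phi_\jb\}$, and it is the constraint on the (redundant) third coordinate $j_3 = -k_1 - k_2$ with $|j_3| \le n$ that forces the halving $m = \lfloor n/2 \rfloor$. Everything else reduces to a Jacobian-free comparison of suprema.
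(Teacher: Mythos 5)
Your proof is correct and is essentially the paper's argument: both rest on the substitution $s_1=(2t_1+t_2)/3$, $s_2=(t_1+2t_2)/3$ turning $\phi_\jb$ into $\e^{2\pi i(j_1s_1+j_2s_2)}$, the index containment that forces $|j_3|\le 2m$ and hence the halving $m=\lfloor n/2\rfloor$, and the domination of the supremum over the image of $\overline\Omega$ by the supremum over $[-1,1]^2$. The only cosmetic difference is that you build an explicit competitor in $\CH_n$ by pulling back the best $T\in\CT_m$, whereas the paper bounds the infimum over $\CH_n$ by the infimum over the sub-family indexed by $-\lfloor n/2\rfloor\le j_1,j_2\le\lfloor n/2\rfloor$; these are the same inequality read in opposite directions.
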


\begin{proof} 
We again work with homogeneous coordinates. Using the fact that 
$t_1+t_2+t_3=0$ and $j_1+j_2+j_3=0$, we can write
$$
   \phi_{\jb}(\tb) = e^{\frac{2\pi i}{3} \jb \cdot \tb} = 
       e^{2 \pi i (j_1 s_1 + j_2 s_2)}, \qquad s_1 = \tfrac{2 t_1 + t_2 }{3}, \quad
          s_2 = \tfrac{ t_1 + 2 t_2 }{3}. 
$$
Clearly $\tb \in \overline{\Omega}$ implies that $(s_1,s_2) \in \Omega^*: 
= \{s: -1 \le s_1, s_2, s_1+s_2 \le 1\}$. Furthermore,
$\jb \in \HH_n$ implies that $- n \le j_1,j_2, j_1+j_2 \le n$.  Consequently,
we have that 
$$
    S_n (\tb): = \sum_{j \in \CH_n} c_\jb \phi_\jb(\tb) = 
      \sum_{\substack {-n \le j_1,j_2 \le n \\
          -n \le j_1 + j_2 \le n} } c_{j_1,j_2,-j_1-j_2} 
            e^{2 \pi i  (j_1s_1 + j_2 s_2)} : = T_n(s). 
$$
Let $g(s) = f(\tb) =  f(2 s_1-s_2, 2s_2-s_1,-s_1-s_2)$ and let  its continuous 
extension to $[-1,1]^2$ be $g^*$. It follows that 
\begin{align*}
    \|f - S_n\| = \max_{s \in \Omega^*} | g(s) - T_n(s)|
          \le \max_{-1 \le s_1,s_2 \le 1}  | g^*(s) - T_n(s)|.
\end{align*}
Taking minimum over all $S_n \in \CH_n$ translates to taking minimal of all
$c_\jb$, consequently we conclude that 
\begin{align*}
   E_n (f) &\, \le  \min_{c_\jb} \max_{-1 \le s_1,s_2 \le 1}  \left | g^*(s) - 
        \sum_{\substack {-n \le j_1,j_2 \le n \\
          -n \le j_1 + j_2 \le n} }  c_\jb  \e^{2 \pi i  (j_1 s_1 + j_2 s_2)}    \right|  \\
               &\, \le  \min_{c_\jb} \max_{-1 \le s_1,s_2 \le 1}  \left | g^*(s) - 
        \sum_{-\lfloor \frac{n}{2}\rfloor \le j_1,j_2 \le \lfloor \frac{n}{2}\rfloor } 
               c_\jb  \e^{2 \pi i  (j_1 s_1 + j_2 s_2)}    \right|  
                   = \CE_{\lfloor \frac{n}{2}\rfloor} (g^*). 
\end{align*}
If we work with $f$ defined on $\Omega_H$, then $g^*$ becomes $f^*$ in the
statement of the proposition. 
\end{proof} 

For smooth functions, this result can be used to derive the convergence order 
of $E_n (f)$. The procedure of the proof, however, is clearly only one direction.
Below we prove both direct and inverse theorems using a modulus 
of smoothness. 

\subsection{Modulus of smoothness}
On the set $\Omega$, we can define a modulus of smoothness
by following precisely the definition for periodic functions on the
real line. Thus, we define for $r \in \NN_0$, 
$$
   \Delta_\tb f(\xb) = f(\xb + \tb) - f(\xb), \qquad \Delta_\tb^r f(\xb) = 
       \Delta_\tb \Delta_\tb^{r-1} f(\xb).
$$
It is well known that 
$$
 \Delta_\tb f (\xb) = \sum_{k=0}^r (-1)^k \binom{r}{k} f(\xb + k \tb).
$$
For $\tb \in \RR_\HH$, let $\|\tb\| := (t_1^2 +t_2^2)^{1/2}$, the usual
Euclidean norm of $(t_1,t_2)$. We can also take other norm of 
$(t_1,t_2)$ instead, for example, $\|\tb\|_\infty := \max\{|t_1|, |t_2|\}$.
Since $t_1+t_2+t_3 =0$, we have evidently 
$\|\tb\|_\infty \le \max\{|t_1|, |t_2|, |t_3|\} \le 2 \|\tb\|_\infty$. The modulus of smoothness of an $H$-periodic function $f$ is then 
defined as
$$
  \omega_r (f;h)_p := \sup_{\|\tb\|\le h} \|\Delta_\tb^r f \|_p, \qquad 1 \le p \le \infty. 
$$

\begin{prop} \label{prop:modulus}
The modulus of smoothness satisfies the following properties:
\begin{enumerate} 
\item For $\lambda > 0$, $\omega_r(f; \lambda h)_p \le  
     (1+\lambda)^r \omega_r(f;h)_p$.
\item Let $\partial_i$ denote the partial derivative with respect to $t_i$ and 
$\partial^k = \partial_1^{k_1}\partial_2^{k_2}\partial_3^{k_2}$ for 
$k = (k_1,k_2,k_3)$. Then 
$$
  \omega_r(f;h)_p \le h^r \sum_{k_1+k_2+ k_3 = r} \frac{r!}{k_1!k_2! k_3!}
       \|\partial^k f\|_p.  
$$
\end{enumerate}
\end{prop}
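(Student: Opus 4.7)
The proof follows the classical pattern for the modulus of smoothness of periodic functions, with one wrinkle: all arguments are carried out on the hyperplane $\RR_H^3$, and translation invariance of the $L^p$ norm on $\Omega$ comes from the $H$-periodicity identity \eqref{IntPeriod} rather than from the translation invariance of Lebesgue measure on a torus.

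\medskip\noindent\textbf{Part (1).} The plan is to prove the inequality first for integer dilations and then bootstrap. For a positive integer $m$, one has
\[
  \Delta_{m\tb} f = \sum_{k=0}^{m-1} T_{k\tb}\, \Delta_\tb f,
\]
where $T_\sb$ denotes translation by $\sb$. Since $T_\sb$ commutes with $\Delta_\tb$, iterating $r$ times gives
\[
  \Delta_{m\tb}^r f(\xb) = \sum_{k_1=0}^{m-1}\cdots \sum_{k_r=0}^{m-1}
      \Delta_\tb^r f\!\left(\xb + (k_1+\cdots+k_r)\tb\right).
\]
Taking $L^p$ norms, Minkowski's inequality combined with \eqref{IntPeriod} yields $\|\Delta_{m\tb}^r f\|_p \le m^r \|\Delta_\tb^r f\|_p$. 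Now, given $\lambda>0$ and $\tb$ with $\|\tb\|\le \lambda h$, set $m:=\lceil\lambda\rceil$ and $\tb':=\tb/m$, so that $\|\tb'\|\le h$. Then
\[
  \|\Delta_\tb^r f\|_p = \|\Delta_{m\tb'}^r f\|_p \le m^r \|\Delta_{\tb'}^r f\|_p
     \le (1+\lambda)^r \omega_r(f;h)_p,
\]
and taking the supremum over admissible $\tb$ concludes (1).

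\medskip\noindent\textbf{Part (2).} The key tool is the directional-derivative representation of finite differences. Let $\partial_\tb := t_1 \partial_1 + t_2\partial_2 + t_3 \partial_3$ be the derivative along $\tb$. The fundamental theorem of calculus gives $\Delta_\tb f(\xb) = \int_0^1 (\partial_\tb f)(\xb+s\tb)\,ds$, and iterating $r$ times,
\[
  \Delta_\tb^r f(\xb) = \int_{[0,1]^r} (\partial_\tb^r f)\!\left(\xb+(s_1+\cdots+s_r)\tb\right)
     ds_1\cdots ds_r.
\]
Expanding by the multinomial theorem,
\[
   \partial_\tb^r = \sum_{k_1+k_2+k_3=r} \frac{r!}{k_1!\,k_2!\,k_3!}\,
      t_1^{k_1}t_2^{k_2}t_3^{k_3}\, \partial^k.
\]
Taking $L^p$ norms, applying Minkowski's integral inequality, and invoking \eqref{IntPeriod} to discard the inner translations leaves
\[
  \|\Delta_\tb^r f\|_p \le \sum_{k_1+k_2+k_3=r} \frac{r!}{k_1!\,k_2!\,k_3!}\,
      |t_1|^{k_1}|t_2|^{k_2}|t_3|^{k_3}\, \|\partial^k f\|_p.
\]
Choosing the norm on $\RR_H^3$ to be the symmetric max norm $\max\{|t_1|,|t_2|,|t_3|\}$ (equivalent, by the remark preceding the proposition, to the Euclidean norm used in the definition of $\omega_r$), each monomial is bounded by $h^r$, and taking the supremum over $\|\tb\|\le h$ gives the claim.

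\medskip\noindent The main obstacle I foresee is cosmetic rather than substantive: justifying the multinomial expansion of $\partial_\tb^r$ on the hyperplane $\RR_H^3$, where the three partials are not independent because $t_1+t_2+t_3=0$, requires either a consistent convention for extending $f$ off the hyperplane (say, constant along the $(1,1,1)$-direction) or else an intrinsic interpretation of $\partial^k f$ so that the expansion is well defined and agrees with the classical formula along every direction in $\RR_H^3$. Once that convention is fixed, both (1) and (2) are routine adaptations of the one-variable proofs.
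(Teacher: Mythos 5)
Your proposal is correct and follows essentially the same route as the paper: part (1) via the standard integer-dilation identity with $m=\lceil\lambda\rceil$, and part (2) via the iterated integral representation $\Delta_\tb^r f(\xb)=\int_{[0,1]^r}(\partial_\tb^r f)(\xb+(s_1+\cdots+s_r)\tb)\,ds$ followed by the multinomial expansion of $\partial_\tb^r$, Minkowski's inequality with \eqref{IntPeriod}, and the bound $|\tb^k|\le h^r$. The convention issue you raise about $\partial^k$ on the hyperplane, and the harmless constant hidden in bounding $|t_3|$ by the chosen norm, are present in the paper's own argument as well and do not constitute a gap.
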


\begin{proof}
The proof of (1) follows exactly as in the proof of one variable. For part (2), it is 
easy to show by induction that 
$$
  \Delta_\tb^r f(\xb) = \int_{[0,1]^r} \partial_{u_1} \ldots \partial_{u_r} 
      f(\xb + u_1 \tb + \ldots + u_r \tb) du_1 \ldots u_r.
$$
The integrant of the right hand side is easily seen, by another induction, to be
$$
  \sum_{k_1+k_2+k_3 = r} \frac{r!}{k_1!k_2!k_3!} \partial^k 
        f(\xb + u_1 \tb + \ldots + u_r \tb) \tb^k, 
$$ 
from which the stated result follows from \eqref{IntPeriod} and the fact
that $\|\tb^k\| \le h^{|k|} = h^r$.  
\end{proof}

\subsection{Direct theorem}
For the proof of the direct theorem, we use an analogue of Jackson integral.
Let $r $ be a positive integer. We consider the kernel
$$
K_{n,r}(\tb):= \lambda_{n,r} \left[\Theta_n(\tb)\right]^{2r}, \qquad 
    \hbox{where} \qquad
  \int_\Omega K_{n,r}(\tb) d\tb =1. 
$$
Since $n^{-1} \Theta_n(\tb)$ is the $(C,1)$ kernel of the Fourier series, 
we see that $\Theta_n \in \CH_n$ and, thus, $K_{n,r} \in \CH_{r n}$. 

\begin{lem} \label{lem:main}
For $\nu \in \NN$ and $\nu \le 2 r -2$, 
$$
  \int_{\Omega} \|\tb\|^\nu K_{n,r} (\tb)  d \tb \le c n^{-\nu}.  
$$
\end{lem}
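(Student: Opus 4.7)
The plan is to reduce the estimate to one-dimensional integrals against the Fej\'er-type kernel $F_n(s):=(\sin((n+1)\pi s)/\sin(\pi s))^{2r}$, by exploiting the factorization of $\Theta_n$. In the affine coordinates $s_1=(2t_1+t_2)/3$, $s_2=(t_1+2t_2)/3$ from \eqref{t-s}, the Jacobian of the change of variables is a positive constant, a direct computation gives $\|\tb\|^2\sim s_1^2+s_2^2+(s_1-s_2)^2$, and the formula \eqref{Theta} becomes $\Theta_n(\tb)^{2r}=F_n(s_1)F_n(s_2)F_n(s_1-s_2)$. Since the image of $\Omega$ lies inside $[-1,1]^2$, and since $\|\tb\|^\nu\le c_\nu(|s_1|^\nu+|s_2|^\nu+|s_1-s_2|^\nu)$ together with the linear symmetries that permute $(s_1,s_2,s_1-s_2)$ up to signs reduce the three terms to one, the problem becomes
\[
 \lambda_{n,r}\int_{[-1,1]^2}|s_1|^\nu F_n(s_1)F_n(s_2)F_n(s_1-s_2)\,ds_1 ds_2\le c\,n^{-\nu}.
\]

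The key one-dimensional ingredients, obtained by splitting each integral at $|s|=1/n$ and using the bounds $|F_n(s)|\le\min(c n^{2r},\,c/|\sin\pi s|^{2r})$, are $\|F_n\|_\infty\sim n^{2r}$, $\|F_n\|_1\sim n^{2r-1}$, and
\[
 \int_{-1}^{1}|s|^\nu F_n(s)\,ds\le c\,n^{2r-1-\nu}\quad\text{for }0\le\nu\le 2r-2.
\]
The upper restriction $\nu\le 2r-2$ is precisely what makes the tail $\int_{1/n}^{1}s^{\nu-2r}\,ds$ bounded uniformly in $n$; this is where the hypothesis on $\nu$ enters. The same three estimates together with Fubini also yield the normalization $\int_\Omega\Theta_n(\tb)^{2r}d\tb\sim n^{6r-2}$ (lower bound from the peak region $\|\tb\|\le c/n$ on which $\Theta_n\sim n^3$, upper bound by the argument in the next paragraph), so that $\lambda_{n,r}\sim n^{2-6r}$.

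For the main bound I perform the $s_2$-integration first. By the $1$-periodicity of $F_n$,
\[
 \int_{-1}^{1}F_n(s_2)F_n(s_1-s_2)\,ds_2\le\|F_n\|_\infty\cdot 2\|F_n\|_1\le c\,n^{4r-1},
\]
uniformly in $s_1$. Multiplying this bound against $|s_1|^\nu F_n(s_1)$ and integrating in $s_1$ gives $c\,n^{4r-1}\cdot n^{2r-1-\nu}=c\,n^{6r-2-\nu}$. Multiplication by $\lambda_{n,r}\sim n^{2-6r}$ then yields $c\,n^{-\nu}$, as required.

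The main obstacle is essentially bookkeeping: verifying the factorization and the Jacobian for the change of variables, confirming that the constraint $\nu\le 2r-2$ (and not a more restrictive one) is enough to give the $|s|^\nu$-weighted one-dimensional estimate with the stated rate, and checking that the crude inner-convolution bound $\|F_n\|_\infty\|F_n\|_1$ combined with the weighted outer bound matches the normalization $\lambda_{n,r}\sim n^{2-6r}$ to produce exactly $n^{-\nu}$. The one-dimensional estimates themselves are standard Dirichlet-kernel splittings.
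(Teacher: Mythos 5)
Your strategy coincides in essence with the paper's: both pass to the coordinates $s_1=(2t_1+t_2)/3$, $s_2=(t_1+2t_2)/3$ in which $\Theta_n(\tb)^{2r}$ factors into the product $F_n(s_1)F_n(s_2)F_n(s_1-s_2)$ with $F_n(s)=(\sin((n+1)\pi s)/\sin\pi s)^{2r}$, both place the weight $\|\tb\|^{\nu}$ on a single factor, and both use $\nu\le 2r-2$ so that the one-dimensional tail integral $\int_{1/n}s^{\nu-2r}\,ds$ contributes only $O(n^{2r-1-\nu})$ (note: this tail is not ``bounded uniformly in $n$'' as you write --- it grows like $n^{2r-1-\nu}$, which happens to be exactly the target rate --- but your conclusion is right). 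The paper executes this with a further change of variables and the three-region case analysis from its $(C,1)$ theorem, whereas you package it as $\|F_n\|_\infty\cdot\|F_n\|_1\cdot\int|s|^\nu F_n$ via Fubini; your organization is cleaner and equivalent. The normalization $\lambda_{n,r}\sim n^{2-6r}$ is obtained the same way in both arguments.

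There is one genuine error you must repair: you enlarge the domain of integration to $[-1,1]^2$ and then assert $\int_{-1}^{1}|s|^{\nu}F_n(s)\,ds\le c\,n^{2r-1-\nu}$. This is false for $\nu\ge1$: $F_n$ is $1$-periodic, so it has peaks of height $\sim n^{2r}$ and width $\sim1/n$ at $s=\pm1$, where $|s|^{\nu}\sim1$, and hence the integral over $[-1,1]$ is at least $c\,n^{2r-1}$; your own majorant $c|\sin\pi s|^{-2r}$ blows up at $s=\pm1$ and cannot yield the claimed decay. The fix is to keep the true image of $\Omega$, which is the hexagon $\{(s_1,s_2):|s_1|\le 2/3,\ |s_2|\le 2/3,\ |s_1-s_2|\le 2/3\}$ (the vertices of $\Omega$ such as $(1,0,-1)$ map to $(2/3,1/3)$, and so on); on it each of the three arguments $s_1$, $s_2$, $s_1-s_2$ stays in $[-2/3,2/3]$, where $|\sin\pi s|\ge c|s|$ and only the central peak of $F_n$ is seen, so all three one-dimensional estimates hold and the bookkeeping $n^{2-6r}\cdot n^{2r}\cdot n^{2r-1}\cdot n^{2r-1-\nu}=n^{-\nu}$ goes through. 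Keeping the hexagonal image is also what legitimizes your symmetry reduction of the three weights to the single term $|s_1|^{\nu}$, since the linear maps permuting $s_1,s_2,s_1-s_2$ up to sign preserve that hexagon but not the square $[-1,1]^2$.
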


\begin{proof}
First we estimate the constant $\lambda_{n,r}$. We claim that 
\begin{equation} \label{lambda}
  \left( \lambda_{n,r} \right)^{-1} = \int_\Omega  \left[\Theta_n(\tb)\right]^{2r}
        d\tb \sim n^{-6 r +2}. 
\end{equation} 
We derive the lower bound first. As in the proof of Theorem \ref{thm:(C,1)}, 
we change variables from $\tb \in \Omega$ to $(s_1,s_2) \in \Omega^*$,
then use symmetry and change variables to $(u_1, u_2) \in \Gamma$. 
The result is that 
$$
 \int_\Omega  \left[\Theta_n(\tb)\right]^{2r} d\tb =
     3 \int_\Gamma \left[\Theta_n^*(\ub)\right]^{2r} d\ub \ge  
      3 \int_{\Gamma^*} \left[\Theta_n^*(\ub)\right]^{2r} d\ub, 
$$
where we choose $\Gamma^* =\{(u_1,u_2): \frac{1}{16 (n+1)} \le u_2 \le u_1/3 
 \le \frac{1}{8(n+1)} \}$, which is a subset of $\Gamma$ and its area is in the
order of $n^{-2}$.  For $u \in \Gamma^*$, 
we have $\sin (n+1) (u_1-u_2) \ge \sin \pi/16$,  $\sin (n+1) (u_1+u_2) 
 \ge \sin \pi/8$, and $\sin (n+1) u_2 \ge \sin \pi/32$; furthermore, 
 $\sin (u_1-u_2) \le \sin 5\pi/(16n) \le 5 \pi/(16n)$,  $\sin (u_1+u_2)
\le \sin \pi/(2n) \le \pi /(2n)$, and $\sin u_2 \le \sin \pi/(8n) \le \pi / (8 n)$. 
Consequently, we conclude that 
$$
 \int_\Omega  \left[\Theta_n(\tb)\right]^{2r} d\tb \ge 
    c  n^{6 r} \int_{\Gamma^*} d u = c n^{6 r -2}. 
$$
This proves the lower bound of \eqref{lambda}. The upper bound will follow 
as the special case $\nu=0$ of the estimate of the integral $I_n^{r,\nu}$ 
below.  

We now estimate the integral 
$$
     I_n^{r,\nu} : = \int_\Omega |\tb|^\nu \left[\Theta_n(\tb)\right]^{2r} d\tb. 
$$
Again we follow the proof of Theorem \eqref{thm:(C,1)}  and make a change 
of variables from $\tb \in \Omega$ to $(u_1, u_2) \in \Gamma$. The change 
of variables shows that 
$t_1 = \tfrac{1}{6} \left[2(u_1-u_2)-(u_1+u_2)\right]$ and $t_2 = 
\tfrac{1}{6} \left[2(u_1+u_2)-(u_1-u_2)\right]$, which implies that 
$$ 
\|\tb\|_\infty =  \max \{ |t_1|, |t_2| \} \le \tfrac12 \max \left \{|u_1-u_2|, 
      |u_1+u_2| \right \}.
$$
Consequently, we end up with 
$$
    I_n^{r,\nu} \le c  \int_\Gamma \max \left \{|u_1-u_2|, |u_1+u_2| \right \}^\nu
       |\Theta_n^*(u)|^{2 r}  du.   
$$
Since $\max\{|a|,|b|\} \le |a|+|b|$, we can replace the $\max\{ ... \}$ term 
in the integrant by the sum of the two terms. The fact that $\nu \le 2 r -2$ 
shows that we can cancel $|u_1-u_2|^\nu$,  or $|u_1+u_2|^\nu$, with the 
denominator of $\Theta_n^*$. After this cancellation, the integral can be
estimated by considering three cases as in the proof of Theorem \ref{thm:(C,1)}. 
In fact, the proof follows almost verbatim. For example, in the case 2, we
end up, using $u_1 - u_3 \ge 2 u_1/3 $ and $u_1+u_2 \ge u_1$, that  
\begin{align*}
&   \int_{3/n}^1 \left[ \frac{\sin \frac{(n+1)(u_1+u_2)\pi}{2}} 
         {\sin \frac{(u_1+u_2)\pi}{2}} \right]^{2r}
                      \left[ \frac{\sin \frac{(n+1)(u_1-u_2)  \pi}{2}}             
                          { \sin\frac{(u_1-u_2)\pi}{2}} \right]^{2r -\nu} du_1 \\
 & \qquad\qquad  \qquad\qquad \qquad\qquad
       \le \int_{3/n}^1 \frac{1}{u_1^{4r-\nu}} du_1 \le c\, n^{4r-\nu-1}. 
\end{align*}
Consequently, it follows that 
\begin{align*}
 \int_{\Gamma_2}  |u_1-u_2|^\nu |\Theta_n^*(u)|du & = 
    \int_0^{1/n} \int_{3/n}^1  |u_1-u_2|^\nu |\Theta_n^*(u)|du \\
     & \le c \,n^{4r-\nu -1}
   \int_{0}^{1/n} \left| \frac{\sin\frac{(n+1)u_2 \pi}{2}} {\sin \frac{u_2\pi}{2}} \right|^{2r}
      du   \le c \, n^{6 r - \nu -2}.
\end{align*}
upon using $|\sin n u / \sin u| \le n$. The other two cases can be handled 
similarly. As a result, we conclude that  $I_n^{r,p} \le c n^{6 r - p -2}$. 
The case $p = 0$ gives the lower bound estimate
of \eqref{lambda}. The desired estimate is over the quantity 
$\lambda_{n,r} I_n^{r,p}$ and follows from our estimates.
\end{proof}

Using the kernel $K_{n,r}$ we can now prove a Jackson estimate: 

\begin{thm}
For $1 \le p \le \infty$ and for each $r = 1,2,\ldots$, there is a constant
$c_r$ such that if $f\in L^p$ then 
$$
    E_n (f)_p \le c_r \omega_r (f, \tfrac{1}{n})_p, \qquad n = 1, 2, ... . 
$$
\end{thm}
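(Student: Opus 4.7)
The plan is to imitate the classical Jackson construction, using the kernel $K_{n,r'}$ of Lemma \ref{lem:main} as a Jackson-type kernel, where $r'$ is chosen large enough that the moment estimates cover every order up to $r$: concretely, any $r'$ with $2r'-2 \ge r$ works (so $r' = r$ handles $r \ge 2$, and $r' = 2$ handles $r = 1$). First I would introduce the Jackson-type operator
$$
J_n f(\xb) := -\sum_{k=1}^{r} (-1)^k \binom{r}{k} \int_\Omega f(\xb + k\tb)\, K_{n,r'}(\tb)\, d\tb,
$$
and verify that $J_n f \in \CH_{r'n}$. Expanding $K_{n,r'}=\sum_{\jb \in \HH_{r'n}} \wh k_\jb\,\phi_\jb$ and $f \sim \sum_{\lb} \wh f_\lb \,\phi_\lb$, the identity $\phi_\lb(\xb+k\tb)=\phi_\lb(\xb)\phi_{k\lb}(\tb)$ together with orthogonality of the exponentials forces the $\tb$-integral to vanish unless $\jb = -k\lb$, i.e. $k\mid\jb$ in $\ZZ_H^3$. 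The surviving $\xb$-frequencies $\lb = -\jb/k$ then satisfy $\|\lb\|_\infty \le r'n/k$, placing each term in $\CH_{\lfloor r'n/k\rfloor}\subseteq \CH_{r'n}$, and in particular $E_{r'n}(f)_p \le \|f - J_n f\|_p$.

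Since $\int_\Omega K_{n,r'}(\tb)\,d\tb = 1$ and, in the paper's sign convention, $\Delta_\tb^r f(\xb)=\sum_{k=0}^r(-1)^k\binom{r}{k}f(\xb+k\tb)$, adding the $k=0$ contribution yields the pointwise identity
$$
f(\xb) - J_n f(\xb) = \int_\Omega \Delta_\tb^r f(\xb)\, K_{n,r'}(\tb)\, d\tb.
$$
Because $K_{n,r'} = \lambda_{n,r'}[\Theta_n]^{2r'}\ge 0$, the generalized Minkowski inequality and the definition of $\omega_r$ give
$$
\|f-J_n f\|_p \le \int_\Omega \|\Delta_\tb^r f\|_p\, K_{n,r'}(\tb)\,d\tb \le \int_\Omega \omega_r(f,\|\tb\|)_p\, K_{n,r'}(\tb)\,d\tb.
$$
Applying Proposition~\ref{prop:modulus}(1) in the form $\omega_r(f,\|\tb\|)_p \le (1+n\|\tb\|)^r\,\omega_r(f,1/n)_p$, I arrive at
$$
\|f - J_n f\|_p \le \omega_r(f,1/n)_p \int_\Omega (1+n\|\tb\|)^r\, K_{n,r'}(\tb)\,d\tb.
$$

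The final step is to expand $(1+n\|\tb\|)^r$ via the binomial theorem and apply Lemma~\ref{lem:main} to each moment $\int_\Omega \|\tb\|^\nu K_{n,r'}(\tb)\,d\tb \le c\,n^{-\nu}$ for $0\le \nu\le r\le 2r'-2$. The factors of $n^\nu$ coming from $(n\|\tb\|)^\nu$ cancel exactly, making the remaining integral $O(1)$. This gives $E_{r'n}(f)_p \le c_r\,\omega_r(f,1/n)_p$, and a rescaling $N = r'n$, absorbing the constant $r'$ by one more invocation of Proposition~\ref{prop:modulus}(1), delivers the claimed bound $E_N(f)_p \le c_r\,\omega_r(f,1/N)_p$.

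The main obstacle is the very first step, namely verifying that $J_n f$ has Fourier support inside $\CH_{r'n}$. The difficulty is that the integrand involves $f$ composed with the $k$-fold dilation $\tb\mapsto k\tb$, so a direct change of variable $\sb = k\tb$ would spit out exponentials at fractional frequencies $\jb/k\notin \ZZ_H^3$ and break the hexagonal spectral structure. The resolution uses the arithmetic of the lattice $\ZZ_H^3$: only those Fourier modes of $K_{n,r'}$ lying on the sublattice $k\,\ZZ_H^3$ pair nontrivially with $f$, and these automatically correspond to $\xb$-frequencies of $\ell^\infty$-norm at most $r'n/k$, keeping everything safely inside $\CH_{r'n}$.
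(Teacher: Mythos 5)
Your proposal is correct and takes essentially the same route as the paper: the paper's Jackson operator $F_n^{\rho,r}f = \int_{\Omega} J_{n,\rho}(\tb)\sum_{k=1}^r(-1)^{k-1}\binom{r}{k}f(\xb+k\tb)\,d\tb$ with $\rho\ge (r+2)/2$ is exactly your $J_n$ (the paper builds the degree normalization into the kernel via $n^*=\lfloor n/\rho\rfloor+1$ rather than rescaling at the end), and it verifies membership in $\CH_n$ by the same orthogonality argument before estimating $\|F_n^{\rho,r}f-f\|_p$ via Minkowski's inequality, the scaling property of $\omega_r$, and the moment bounds on $K_{n,\rho}$. No substantive differences.
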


\begin{proof}
As in the proof of classical Jackson estimate for trigonometric polynomials on
$[0, 2\pi]$, we consider the following operator
$$
F_n^{\rho,r} f (\xb):= \int_{\Omega} J_{n,\rho}(\tb) \sum_{k=1}^r (-1)^{k-1} 
    \binom{r}{k}    f(\xb + k \tb) d\tb, 
$$
where $J_{n,\rho}(\tb) = K_{n^*,\rho}(\tb)$ with $n^* = \lfloor \frac{n}{\rho} \rfloor +1$,
and $\rho \ge (r+2)/2$. Evidently, $J_{n,\rho}(-\tb) = J_{n,\rho}(\tb)$. 
Using the fact that $J_{n,\rho} \in \CH_n$, we see that $F_{n,\rho} f$ can
be written as a linear combination of   
\begin{equation} \label{periodJ}
\int_{\Omega}  f(\xb + k \tb)  \phi_{\jb} (\tb) d\tb, \qquad \jb \in \CH_n, 
    \quad k =1,\ldots, r.
\end{equation}
As $f$ is $H$-periodic, so is $f(\xb + k \tb)$ as a function of $\tb$. Let
 $F_m = \sum_{\jb \in \CH_m} a_\jb \phi_\jb$  denote the $(C,1)$ means of 
 the Fourier series of $f$ over $\Omega$. Then $F_m$ converges to $f$ 
 uniformly on $\Omega$. If $\jb \ne - k \lb$ for some 
$\lb \in \CH_n$ then, using the fact that $\phi_\lb (\xb + k \tb) =
 \phi_\lb(\xb) \phi_{k\lb}(\tb)$,  we obtain that
\begin{align*}
  \int_\Omega f(\xb + k \tb)  \phi_{\jb} (\tb) d\tb &\,= \lim_{n\to \infty}
       \int_\Omega F_m(\xb + k \tb)  \phi_{\jb} (\tb) d\tb \\
     & \, =   \lim_{n\to \infty} \sum_{\lb \in \CH_n}  a_\lb  \phi_\lb(\xb)
        \int_{\Omega} \phi_{k\lb}(\tb)\phi_{\jb} (\tb) d\tb 
     =  0.   
\end{align*}
If $\jb = - k \lb$, then making a change of variables $\xb + k \tb = \sb$ shows that
$$
  \int_{\Omega}  f(\xb + k \tb)  \phi_{\jb} (\tb) d\tb = 
     \int_{\Omega}  f(\xb + k \tb)  \phi_{\lb} (- k \tb) d\tb = 
          \int_{\Omega}  f(\sb)  \phi_{\lb} (\xb - \sb) d\sb
$$
which is a trigonometric polynomial in $\xb$ in $\CH_n$. Consequently, we 
conclude that $F_n^{\rho,r} f$ is indeed a trigonometric polynomial in $\CH_n$. 

Since $J_{n,\rho}(\tb) = J_{n,\rho}(-\tb)$, it follows from (1) of Proposition
\ref{prop:modulus} and Minkowski's inequality that 
\begin{align*}
 \|F_n^{\rho,r} f -f \|_p &\, \le \left \| \int_\Omega K_{n,\rho} (\tb) 
           \Delta_\tb^r f(\cdot)d \tb \right \|_p 
       \le \int_\Omega K_{n,\rho} (\tb)  \omega_r(f; \|\tb\|)_p d \tb \\
   &\,   \le \omega_r(f; \tfrac{1}{n})_p \int_\Omega K_{n,\rho} (\tb) (1+n \|\tb\|)^r
        d \tb   \le c\, \omega_r(f; \tfrac{1}{n})_p, 
\end{align*}
where the last step follows from Lemma \ref{lem:main}.
\end{proof} 

For $1 \le p \le \infty $ and $r=1,2,\ldots$, define $W_p^r$ as the space of
$H$-periodic functions whose $r$-th derivatives belong to $L^p$. 

\begin{cor} 
For $1 \le p \le \infty$ and $r = 1, 2, \ldots$, if $f \in W_p^{r}$ then 
$$
   E_n (f)_p \le c n^{-r} \sum_{|k| = p} \|\partial^k f\|_p, \qquad n = 1,2, \ldots.  
$$
\end{cor}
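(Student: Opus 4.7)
The strategy is simply to chain the Jackson-type direct theorem with part (2) of Proposition \ref{prop:modulus}. The direct theorem gives, for any $f \in L^p$,
$$
E_n(f)_p \le c_r \,\omega_r(f; 1/n)_p.
$$
The hypothesis $f \in W_p^r$ is exactly what is needed to invoke part (2) of Proposition \ref{prop:modulus} with step $h = 1/n$, which produces
$$
\omega_r(f; 1/n)_p \le n^{-r} \sum_{k_1+k_2+k_3 = r} \frac{r!}{k_1!\,k_2!\,k_3!}\, \|\partial^k f\|_p.
$$

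Combining these two estimates and absorbing the (finite collection of) multinomial coefficients into the generic constant yields
$$
E_n(f)_p \le c\, n^{-r} \sum_{|k|=r} \|\partial^k f\|_p,
$$
which is the claim (the exponent on the sum being $|k|=r$; the $|k|=p$ printed in the statement is plainly a typographical slip). The argument is a one-line chain, since both heavy-lifting estimates have already been established.

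The only mild subtlety is whether the integral representation of $\Delta_\tb^r f$ used in proving part (2) of Proposition \ref{prop:modulus} is available under the Sobolev assumption $f \in W_p^r$ alone. This is handled by a standard density argument: approximate $f$ in the $W_p^r$ norm by smooth $H$-periodic functions (for instance by convolving with a smooth $H$-periodic mollifier, or by using the $(C,1)$ means of its hexagonal Fourier series, which converge in $L^p$ by \thmref{thm:(C,1)} once combined with the uniform boundedness that gives $L^p$ convergence in general), apply the estimate on the smooth approximants, and pass to the limit using continuity of $E_n(\cdot)_p$ and of $\|\partial^k(\cdot)\|_p$. No essentially new ingredient is required.
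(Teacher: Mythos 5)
Your proposal is correct and matches the argument the paper intends: the corollary is stated immediately after the Jackson theorem precisely so that it follows by chaining $E_n(f)_p \le c_r\,\omega_r(f;1/n)_p$ with part (2) of Proposition \ref{prop:modulus} at $h=1/n$, exactly as you do (and you are right that $|k|=p$ in the statement is a typo for $|k|=r$). Your added remark on extending the integral representation of $\Delta_\tb^r f$ from smooth functions to all of $W_p^r$ by density is a reasonable piece of care that the paper leaves implicit.
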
 

\subsection{Inverse Theorem} 
As in the classical approximation theory, the main task for proving an inverse
theorem lies in the proof of a Bernstein inequality. For this, we introduce an
operator that is of interest in its own right. Let $\eta$ be a nonnegative $C^\infty$ 
function on $\RR$ such that
$$
   \eta(t) = 1, \quad \hbox{if $0\le t \le 1$}, \quad \hbox{and}\quad 
     \eta(t) = 0, \quad \hbox{if $t \le 0$ or $t\ge 2$}.  
$$
We then define an operator $\eta_n f$ on $\Omega$ by
$$
  \eta_n f(\xb) := \int_\Omega f(\xb-\tb) \eta_n(\tb) d\tb, 
         \qquad \hbox{where}\quad
  \eta_n(\tb):= \sum_{k=0}^{2n} \eta\left(\frac{k}{n} \right) D_k (\tb), 
$$
where $D_k$ is the Dirichlet kernel in \ref{D-kernel}. Evidently, $\eta_n f \in 
\CH_{2n}$ and $\eta_n f = f$ if $f\in \CH_n$. Such an operator has been 
used by many authors, starting from \cite{Kam}. It is applicable for orthogonal
expansion in many different settings;  see, for example,  \cite{X05}. A standard
procedure of summation by parts leads to the fact that $\|\eta_n f \|$ is 
bounded. Consequently, using  the fact that $\eta_n f = f$ for $f \in \CH_n$, 
we have the following result. 

\begin{prop}
For $1 \le p \le \infty$, if $f \in L^p$ then
$$
    \|\eta_n f - f \|_p \le c E_n (f)_p, \qquad n = 1, 2, \ldots .  
$$
\end{prop}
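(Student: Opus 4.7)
My plan is the standard near-projection argument. Since $\CH_n$ is finite-dimensional, a best approximation $S \in \CH_n$ exists with $\|f-S\|_p = E_n(f)_p$. Because $\eta(k/n)=1$ for $0 \le k \le n$, the operator $\eta_n$ reproduces every $\phi_\jb$ with $\jb \in \HH_n$, and hence $\eta_n S = S$. Writing
\begin{equation*}
\eta_n f - f = \eta_n(f-S) - (f-S)
\end{equation*}
and applying Minkowski's inequality yields
\begin{equation*}
\|\eta_n f - f\|_p \le \bigl(\|\eta_n\|_{p\to p} + 1\bigr)\,E_n(f)_p.
\end{equation*}
The statement thus reduces to the uniform operator-norm bound $\|\eta_n\|_{p\to p} \le c$, which is exactly the claim asserted (but not proved in detail) in the paragraph preceding the proposition.

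By Young's inequality for convolutions of $H$-periodic functions, an immediate consequence of \eqref{IntPeriod} together with Fubini, one has $\|\eta_n f\|_p \le \|\eta_n\|_1 \|f\|_p$, so the task is further reduced to the $L^1$-bound $\|\eta_n\|_1 \le c$ uniformly in $n$. This is the substantive step, and I would attack it by applying Abel summation a fixed number of times $m$ to $\eta_n(\tb) = \sum_{k=0}^{2n} \eta(k/n) D_k(\tb)$. After $m$ passes the sum is re-expressed, with coefficients $\Delta^m \eta(k/n)$, as a linear combination of Ces\`aro-type kernels $K_k^{(m-1)}$ weighted by factors of order $A_k^{m-1}$; all boundary terms in the successive Abel sums vanish because $\eta$ and all of its derivatives are zero at $t = 0$ and $t = 2$. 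Since $\eta \in C^\infty(\RR)$, the discrete differences obey $|\Delta^m \eta(k/n)| \le c\,n^{-m}$, and choosing $m$ large enough (say $m = 3$) so that $\|K_k^{(m-1)}\|_1$ is uniformly bounded in $k$ — here Lemma 3.3 is the crucial input, since the nonnegativity of $K_k^{(2)}$ gives $\|K_k^{(2)}\|_1 = 1$ outright — one arrives at an estimate of the shape
\begin{equation*}
\|\eta_n\|_1 \le c\,n^{-m} \sum_{k=0}^{2n} A_k^{m-1}\,\|K_k^{(m-1)}\|_1 \le c\,n^{-m}\sum_{k=0}^{2n} k^{m-1} \le c,
\end{equation*}
which is the required uniform bound.

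The main obstacle is the Abel-summation bookkeeping on the hexagonal series: the Dirichlet kernel $D_k$ is organized by hexagonal shells $\JJ_k$ rather than by single Fourier frequencies, so identifying the correct intermediate Ces\`aro-type kernels at each step and verifying the vanishing of all boundary terms takes some care. Once that is arranged, the positivity of the $(C,2)$ kernel established in Lemma 3.3 provides precisely the ingredient that makes the summation-by-parts argument deliver the clean $L^1$-estimate on which the proposition rests; the rest of the proof is then the routine reproduction-plus-triangle-inequality step carried out above.
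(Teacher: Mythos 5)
Your argument is exactly the one the paper intends: the proposition is derived from the reproduction property $\eta_n S = S$ for $S \in \CH_n$ together with the uniform bound $\|\eta_n\|_{p\to p} \le c$, which the paper attributes (without details) to ``a standard procedure of summation by parts.'' Your proposal is correct and in fact supplies more of the summation-by-parts bookkeeping than the paper does; note only that two Abel passes already suffice, since $\|K_k^{(1)}\|_1 \le c$ follows from the estimate $\int_\Omega |\Theta_k| \le c\,k$ established in the proof of Theorem \ref{thm:(C,1)}, so the positivity of $K_k^{(2)}$ from Lemma 3.3 is a convenient but not essential input.
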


This shows that for all practical purpose, $\eta_n f$ is as good as the polynomial
of best approximation. For our purpose, however, the more important fact is the 
following near exponential estimate of the kernel function $\eta_n(\tb)$. 
For $\a \in \NN_0^d$, write $|\a|=\a_1+\ldots+\a_d$.

\begin{lem}
For each $k= 1, 2, \ldots$, there exists a constant $c_k$ that depends on $k$,
such that
$$
 \partial^\alpha \eta_n (\tb) \le c_k \frac{n^{|\alpha|+2}}{(1+ n \|\tb\|)^k}, 
               \qquad \tb \in \overline{\Omega}.
$$
\end{lem}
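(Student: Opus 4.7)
The plan is to exploit the $C^\infty$ smoothness of $\eta$ via repeated summation by parts in the definition of $\eta_n$, transferring the differentiation $\partial^{\a}$ onto the surviving $\Theta$-kernels, which are then bounded by the compact formula \eqref{Theta}.

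Using $D_k=\Theta_k-\Theta_{k-1}$ together with the boundary facts $\Theta_{-1}\equiv 0$ and $\eta(2)=0$, a first Abel summation gives
$$
\eta_n(\tb)=-\sum_{k=0}^{2n-1}\Delta_n\eta(k)\,\Theta_k(\tb),\qquad \Delta_n\eta(k):=\eta\!\left(\tfrac{k+1}{n}\right)-\eta\!\left(\tfrac{k}{n}\right).
$$
Since $\eta$ is constant on $[0,1]$ and on $[2,\infty)$, $\Delta_n\eta(k)$ has size $O(1/n)$ and is supported on $k\sim n$. Iterating $m$ times produces a representation
$$
\eta_n(\tb)=\sum_k \Delta_n^{m}\eta(k)\,\wt\Theta_k^{(m)}(\tb),
$$
where $\Delta_n^{m}\eta(k)=O(n^{-m})$ is still supported on $k\sim n$ and $\wt\Theta_k^{(m)}$ is an $(m{-}1)$-fold iterated partial sum of $\Theta_j$'s. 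Differentiation commutes with this outer summation, so the task reduces to pointwise control of $\partial^{\a}\wt\Theta_k^{(m)}(\tb)$.

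Next, exploit the product structure of \eqref{Theta}: each $\Theta_k$ factors as $\prod_{i<j}\sin((k{+}1)(t_i-t_j)\pi/3)/\sin((t_i-t_j)\pi/3)$, and each factor satisfies $|\sin((k{+}1)u\pi/3)/\sin(u\pi/3)|\le\min\{k{+}1,\,c/|u|\}$; moreover, each partial differentiation multiplies one factor either by $k{+}1$ (differentiating the numerator sine) or by $1/|t_i-t_j|$ (differentiating the denominator). A routine bookkeeping, together with the fact that iterated partial sums of $j^{|\a|+3}$ over $j\sim n$ are of order $n^{|\a|+3+(m-1)}$, yields after combining with the $n^{-m}$ gain an estimate of the form
$$
|\partial^{\a}\eta_n(\tb)|\le c_{\a,m}\,n^{|\a|+2}\prod_{i<j}\min\bigl\{1,\,(n|t_i-t_j|)^{-1}\bigr\}^{a_{ij}},
$$
for nonnegative exponents $a_{ij}$ that sum to a quantity comparable to $m$ and may be distributed freely among the three pairs.

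Finally, since $t_1+t_2+t_3=0$ forces $\|\tb\|\sim \max_{i<j}|t_i-t_j|$, at least one pair $(i,j)$ realises $|t_i-t_j|\sim\|\tb\|$ and contributes a factor of order $(1+n\|\tb\|)^{-1}$ to the product above; placing $m\ge k$ of the iterations onto that pair promotes the factor to $(1+n\|\tb\|)^{-k}$, which is exactly the asserted bound. The main obstacle is the geometric bookkeeping: unlike the one-dimensional setting, where a single sine-denominator carries the whole decay, here the three pairwise differences $|t_1-t_2|,|t_2-t_3|,|t_3-t_1|$ can be small or large independently, so $\overline{\Omega}$ must be partitioned according to which of them exceed $1/n$, and in each region the Abel iterations must be distributed onto whichever pair dominates $\|\tb\|$ in order to extract the uniform $(1+n\|\tb\|)^{-k}$ decay.
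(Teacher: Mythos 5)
Your approach diverges from the paper's, which never touches summation by parts: the paper writes $\eta_n(\tb)=\sum_{\jb\in\HH_{2n}}\eta(|\jb|_H/n)\,\jb^{\a}\phi_\jb(\tb)$ (after differentiating), realizes the coefficients as samples $\wh\Phi_n(\jb)$ of the Fourier transform of a function $\Phi_n$ with $\wh\Phi_n(\tb)=\eta(|\tb|_H/n)\tb^{\a}$, applies the Poisson summation formula \eqref{Poisson} to convert the sum into $\sum_{\jb}\Phi_n(\tb+3\jb)$, and then gets arbitrary polynomial decay of $\Phi_n$ by integrating by parts $k$ times in the Fourier integral, which costs only derivatives of the $C^\infty$ cutoff $\eta$. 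That route is robust precisely because it never needs any pointwise estimate on iterated sums of the hexagonal Dirichlet kernels.

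Your route has a genuine gap at the step you call ``routine bookkeeping.'' After $m$ Abel summations you must bound the $m$-fold iterated partial sums $\wt\Theta_k^{(m)}$, and you claim a bound $\prod_{i<j}\min\{1,(n|t_i-t_j|)^{-1}\}^{a_{ij}}$ with $\sum a_{ij}\sim m$ \emph{freely distributable} among the three pairs. Nothing in your listed ingredients delivers this. The product formula \eqref{Theta} gives each individual $\Theta_j$ exactly one power of decay per pair (three in total), and on a symmetry axis, say $t_1=t_2$ with $|t_2-t_3|\sim 1$, the factor $\sin((j{+}1)(t_1-t_2)\pi/3)/\sin((t_1-t_2)\pi/3)$ contributes $j+1$ with no decay at all, so $|\Theta_j(\tb)|$ is only $O(j)$ there while $\|\tb\|\sim 1$; summing \emph{absolute values} of such pointwise bounds through the iterated sums can never raise the total decay exponent above $2$ at such points, whereas the lemma demands $(1+n\|\tb\|)^{-k}$ for arbitrary $k$. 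Extracting more decay requires genuine cancellation in the iterated exponential sums, i.e.\ pointwise estimates for the high-order Ces\`aro kernels $K_n^{(\d)}$ of the hexagonal series. The paper computes these only for $\d=1,2$ (and already describes the $\d=2$ case as a tedious sum-of-squares identification); the estimates you need for $\d=m$ large, with the decay concentrated on whichever pairwise difference is large, are an unproven and nontrivial claim, not a consequence of $|\sin((k{+}1)u)/\sin u|\le\min\{k{+}1,c/|u|\}$. Unless you can supply those kernel bounds, the argument does not close; the Poisson-summation route avoids the issue entirely.
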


\begin{proof}
The main tool of the proof is the Poisson summation formula \eqref{Poisson}
as used in the case of trigonometric series on the real line in \cite{PX}.
Let us introduce a notation that $|\tb|_H = \max\{|t_1|, |t_2|, |t_3|\}$. Since
$\JJ_n = \{ \jb \in \HH_n:  \hbox{ $|j_1| = n$ or  $|j_2|=n$ or $|j_3| =n$} \}$,
we can write $\eta_n(\tb)$ as 
$$
 \eta_n (\tb) = \sum_{k=0}^{2n} \eta\left(\frac{k}{n} \right) \sum_{\jb \in \JJ}
       \phi_\jb(\tb) = 
      \sum_{\jb \in \HH_n} \eta\left( \frac{|\jb|_H}{n} \right) \phi_\jb(\tb). 
$$
In particular, for $\a \in \NN_0^3$, we have 
$$
  \partial^\a  \eta_n(\tb) =   \left(\tfrac{2 \pi}{3}\right)^{|\a|} \sum_{\jb \in \HH_n} 
         \eta\left( \frac{|\jb|_H}{n} \right) \jb^\a \phi_\jb(\tb).
$$
An immediate consequence of this expression is that 
\begin{equation} \label{eta-bound1}
  \left |\partial^\a \eta_n(\tb) \right| \le c \|\eta\|_\infty \sum_{\jb \in \HH_n} \|\jb^\a\| 
      \le c \|\eta\|_\infty n^{|\a|+2}.
\end{equation}

Define $\Phi_n$ such that $\wh \Phi_n (\tb) = \eta \left(\frac{|t|_H}{n}\right)
 \tb^\a$. Then 
$
    \Phi_n(\xi)  =  \int_{\RR^3_H} \wh \Phi_n(\tb) e^{\frac{2\pi i}{3} \xi \cdot \tb} d\tb.
$
The definition of $\Phi_n$ and Poisson summation formula shows that 
\begin{equation} \label{eta-bound2}
  \eta_n (\tb) = \sum_{\jb \in \HH_n} \wh \Phi_n(\jb) \phi_\jb(\tb)
     = 2 \sqrt{3} \sum_{\jb \in \ZZ_H^3} \Phi_n (\tb + 3 \jb). 
\end{equation}
In order to estimate the right hand side we first derive an upper bound for 
$\Phi_n$. Since $\eta$ is a $C^\infty$ function and $\|\tb\|_H$ is differentiable 
except when one of the variable is zero,  taking derivatives in $L^1$ norm, 
we  end up with
$$
  \left(\tfrac{2 \pi i}{3}\right)^{|\b|} \tb^\b \Phi_n(\tb) =  \int_{\RR^3_H} \partial^\b 
              \wh\Phi_n(\sb) e^{\frac{2\pi i}{3} \sb \cdot \tb} d\sb 
    =  \int_{\RR^3_H}  \partial^\b \left [ \eta \left(\tfrac{|\sb|_H}{n}\right) \sb^\a \right ]
        e^{\frac{2\pi i}{3} \sb \cdot \tb} d\sb. 
$$
Each derivative of $\eta\left(\frac{|\tb|_H}{n}\right)$ yields a $n^{-1}$. For 
$\beta \in \NN_0^3$ and $k:=|\b| > |\a|$, we have 
\begin{align*}
  \partial^\b \left [ \eta \left(\tfrac{|\tb|_H}{n}\right) \tb^\a \right ]
     & = \sum_{|\g| \le k} \binom{k}{\g} n^{-k+|\g|} \eta^{(k-|\g|)} 
      \left(\tfrac{|\tb|_H}{n}\right)  \partial^{\g} \tb^\a \\
    & =  \sum_{|\g| \le |\a|} \binom{|\b|}{\g} n^{-k+|\g|} \eta^{(k-|\g|)}
       \left(\tfrac{|\tb|_H}{n}\right) \frac{\a!}{\g!} \tb^{\a -\g}, 
\end{align*}
where we have used muti-index notations that for $k \in \NN$ and 
$\a \in \ZZ^d$, $\alpha! = \a_1! \ldots \a_d!$ and $\binom{k}{\a} = 
k!/ (\a! (k-|\a|)!)$.  Hence, as $\eta^{(j)}$ is supported on 
$[1,2]$ for $j \ge 1$, we deduce that 
\begin{align*}
  \left|  \left(\tfrac{2 \pi i}{3}\right)^{k} \tb^\b \Phi_n(\tb) \right | 
  &  \le  c \, n^{-|\b|+|\a|} \sum_{|\g|\le |\a|}\| \eta^{(k-|\g|)}\|_\infty 
         \int_{n \le |\tb|_H \le 2 n}d\tb \\
  &  \le c \, n^{-k+|\a| +2} \sum_{j= k-|\a|}^{k} \|\eta^{(j)}\|_\infty. 
\end{align*}
Together with \eqref{eta-bound1}, we conclude then
$$
  \left| \Phi_n(\tb) \right | \le c_k \frac{n^{|\a|+2}}{(1+ n \|\tb\|)^{k} },
      \qquad c_k = c \max_{k-|\a| \le j\le k} \|\eta^{(j)}\|_\infty. 
$$

As a result of the estimate, we conclude from \eqref{eta-bound2} that 
$$
  \left| \eta_n(\tb)\right| \le c \sum_{\jb \in \ZZ_H^3} |\Phi_n(\tb + 3 \jb)|
   \le c_k  \sum_{\jb \in \ZZ_H^3}\frac{n^{|\a|+2}}{(1+ n \|\tb+ 3 \jb\|)^{k} }. 
$$
Since $\|\tb\| = \max\{|t_1|, |t_2|\} \le 1$ for $\tb \in \Omega$, we have 
$\|\tb + 3 \jb\| \ge 3 \|\jb\| - 1 \ge 2\|\jb\|$ if $\jb \ne 0$, and thus,
$1+ n \|\tb + 3 \jb\| \ge (1+ n) \|\jb\|$. Consequently
$$
 \left| \eta_n f(\tb)\right| \le c_k \frac{n^{|\a|+2}}{(1+ n \|\tb\|)^{k} } +
 \frac{n^{|\a|+2}}{(1+n)^k}  \sum_{0 \ne \jb \in \ZZ_H^3}\frac{1}{\|\jb\|^{k} }
    \le  c_k \frac{n^{|\a|+2}}{(1+ n \|\tb\|)^{k} }
$$
and the proof is completed. 
\end{proof}

\begin{rem}
The proof of the above estimate relies essentially on Poisson summation
formula, which is known to hold for all lattice $A\ZZ^d$ (see, for example,
\cite{H, LSX}). Thus, there is a straightforward extension of the above 
result with an appropriate definition of partial sums. For relevant results 
on lattices, see \cite{CS}. 
\end{rem}

As an application of this estimate, we can now establish the Bernstein 
inequality for hexagonal trigonometric polynomials.

\begin{thm}
If $\a \in \NN_0^3$, then for $1 \le p \le \infty$ there is a constant $c_p$ 
such that 
$$
\left \| \partial^\a S_n \right \|_p \le c_p n^{|\a|} \|S_n\|_p, 
    \qquad \hbox{for all  $S_n \in \CH_n$}.
$$
\end{thm}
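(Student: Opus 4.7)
The strategy is the standard one based on a Christoffel–type reproducing kernel: since $\eta_n f = f$ whenever $f \in \CH_n$, the operator $\eta_n$ acts as the identity on $\CH_n$, so $S_n = \eta_n S_n$. Writing this out explicitly, for $S_n \in \CH_n$ we have
$$
   S_n(\xb) = \int_\Omega S_n(\xb - \tb)\, \eta_n(\tb)\, d\tb.
$$
I would then move the derivative $\partial^\a$ off $S_n$ and onto the kernel $\eta_n$. Since $\partial_\xb^\a [S_n(\xb-\tb)] = (-1)^{|\a|}\partial_\tb^\a [S_n(\xb-\tb)]$, an integration by parts in $\tb$ (boundary terms vanish because $S_n(\xb-\tb)\eta_n(\tb)$ is $H$-periodic in $\tb$) yields
$$
   \partial^\a S_n(\xb) = \int_\Omega S_n(\xb-\tb)\, \partial^\a \eta_n(\tb)\, d\tb.
$$

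Next I would apply Minkowski's integral inequality together with the $H$-periodicity of $S_n$ (so that $\|S_n(\cdot - \tb)\|_p = \|S_n\|_p$) to obtain
$$
   \|\partial^\a S_n\|_p \le \|S_n\|_p \int_\Omega |\partial^\a \eta_n(\tb)|\, d\tb.
$$
Thus the whole inequality reduces to the $L^1$ bound $\|\partial^\a \eta_n\|_1 \le c\, n^{|\a|}$.

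For this bound I would invoke the kernel estimate of the preceding lemma: for any $k \in \NN$,
$$
   |\partial^\a \eta_n(\tb)| \le c_k \frac{n^{|\a|+2}}{(1+n\|\tb\|)^k}, \qquad \tb \in \overline{\Omega}.
$$
Choosing any $k > 2$ and passing to polar coordinates in the $(t_1,t_2)$-plane (recall $\Omega$ is a bounded 2-dimensional region in homogeneous coordinates, with $d\tb \sim dt_1\,dt_2$), substitute $u = n r$ to get
$$
   \int_\Omega \frac{d\tb}{(1+n\|\tb\|)^k} \le \frac{c}{n^2}\int_0^{\infty}\frac{u\,du}{(1+u)^k} = \frac{c_k'}{n^2}.
$$
Combining these estimates gives $\int_\Omega |\partial^\a \eta_n(\tb)|\, d\tb \le c_k\, n^{|\a|}$, which yields the Bernstein inequality.

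The only nontrivial ingredient is the pointwise bound on $\partial^\a \eta_n$, but this has already been established via the Poisson summation formula. Everything else—the reproducing property $\eta_n S_n = S_n$, the integration by parts transferring the derivative onto the kernel, Minkowski's inequality, and the polar-coordinate integration of a radially decaying tail—is routine. No step here poses a genuine obstacle once the previous lemma is in hand.
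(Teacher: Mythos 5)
Your proposal is correct and follows essentially the same route as the paper: reproduce $S_n=\eta_n S_n$, put the derivative on the kernel, and reduce everything to the bound $\|\partial^\a\eta_n\|_1\le c\,n^{|\a|}$, which follows from the pointwise estimate of the preceding lemma with $k>2$. The only (harmless) differences are that the paper differentiates the kernel directly in the convolution $\int_\Omega S_n(\sb)\,\eta_n(\tb-\sb)\,d\sb$ rather than integrating by parts, and that it proves only the endpoint cases $p=1,\infty$ and gets $1<p<\infty$ by interpolation, whereas your use of Minkowski's integral inequality handles all $p$ at once.
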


\begin{proof} 
Recall that $\eta_n f \in \CH_{2n}$ and $\eta_n f =f $ for $f \in \CH_n$. We
have then 
$$
S_n(\tb)  = (\eta_n S_n)(\tb) = \int_{\Omega} S_n(\sb) \eta_n( \tb - \sb) d \sb. 
$$
For $ p =1$ and $p = \infty$, we then apply the previous proposition 
with $k =4$ to obtain
\begin{align*}
  \left\|\partial^\a S_n \right\|_p & \le \| S_n\|_p  \int_{\Omega} 
           |\partial^\a \eta_n(\sb)| d \sb 
       \le c \|S_n\|_p \int_{\Omega} \frac{n^{|\a|+2}}{(1+ n|\tb|)^4} d\tb  \\
     &  \le cn^{|\a} \|S_n\|_p \int_{\RR^2} \frac{1}{(1+ |\tb|)^4} dt_1d t_2        
        \le cn^{|\a} \|S_n\|_p, 
\end{align*}
which establishes the stated inequality for $p =1$ and $p=\infty$. 
The case $1  < p < \infty$ follows from the case of $p =1$ and $p=\infty$ by
interpolation.  
\end{proof}

It is a a standard argument by now that the Bernstein inequality yields the
inverse theorem. 

\begin{thm} 
There exists a constant $c_r$ such that for each function $f\in C(\overline{\Omega})$
$$
    \omega_r(f; h)_p \le c_r h^r \sum_{0 \le n\le h^{-1}} (n+1)^{r-1} E_n (f)_p.
$$
\end{thm}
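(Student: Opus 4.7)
The plan is to run the standard dyadic decomposition argument of DeVore--Lorentz, adapted to the hexagonal setting using the Bernstein inequality of Theorem 4.8 together with the differential bound on the modulus from Proposition~\ref{prop:modulus}(2).

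First I would fix $m \in \NN_0$ with $2^m \le h^{-1} < 2^{m+1}$ and, for each $k$, pick $S_{2^k}^* \in \CH_{2^k}$ with $\|f - S_{2^k}^*\|_p = E_{2^k}(f)_p$. Telescoping,
\begin{equation*}
f = S_1^* + \sum_{k=0}^{m-1}\bigl(S_{2^{k+1}}^* - S_{2^k}^*\bigr) + \bigl(f - S_{2^m}^*\bigr),
\end{equation*}
so by the sub-additivity of $\omega_r(\cdot;h)_p$ it suffices to bound each piece. The tail is easy: since $\|\Delta_\tb^r g\|_p \le 2^r \|g\|_p$, one has $\omega_r(f - S_{2^m}^*;h)_p \le 2^r E_{2^m}(f)_p$, which is absorbed into the sum at $n = 2^m$.

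The heart of the proof is the estimate for a trigonometric polynomial $T \in \CH_N$: combining Proposition~\ref{prop:modulus}(2) with the Bernstein inequality of Theorem~4.8 gives
\begin{equation*}
\omega_r(T;h)_p \le h^r \!\!\sum_{k_1+k_2+k_3=r}\!\! \tfrac{r!}{k_1!k_2!k_3!}\,\|\partial^k T\|_p \le c_r\, h^r N^r \|T\|_p.
\end{equation*}
Applying this to $T = S_{2^{k+1}}^* - S_{2^k}^* \in \CH_{2^{k+1}}$ and using the triangle inequality $\|S_{2^{k+1}}^* - S_{2^k}^*\|_p \le 2 E_{2^k}(f)_p$ yields
\begin{equation*}
\omega_r\!\bigl(S_{2^{k+1}}^* - S_{2^k}^*;h\bigr)_p \le c_r\, h^r\, 2^{(k+1)r} E_{2^k}(f)_p,
\end{equation*}
and an analogous (easier) bound handles the $S_1^*$ term.

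Finally I would convert the dyadic sum into the full sum appearing in the statement. Because $E_n(f)_p$ is non-increasing in $n$, for $k \ge 1$,
\begin{equation*}
2^{kr}\, E_{2^k}(f)_p \le c \sum_{n=2^{k-1}}^{2^k-1} (n+1)^{r-1} E_n(f)_p,
\end{equation*}
since the sum has roughly $2^{k-1}$ terms each of size at least $(2^{k-1})^{r-1} E_{2^k}(f)_p$. Summing over $0 \le k \le m$ telescopes into $c \sum_{0 \le n \le 2^m} (n+1)^{r-1} E_n(f)_p \le c \sum_{0 \le n \le h^{-1}} (n+1)^{r-1} E_n(f)_p$, giving the desired inequality. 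The main obstacle is essentially cosmetic: verifying the comparison of the dyadic sum with the full weighted sum and keeping constants independent of $f$ and $h$; the genuine analytic content (Bernstein and the differential bound on $\omega_r$) has already been established.
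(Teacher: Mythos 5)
The paper gives no proof of this theorem at all---it merely remarks that the Bernstein inequality yields the inverse theorem by a ``standard argument''---and your proposal is exactly that standard dyadic argument, so in approach you are doing precisely what the author intends. The telescoping decomposition, the tail estimate $\omega_r(f-S_{2^m}^*;h)_p\le 2^r E_{2^m}(f)_p$, the key bound $\omega_r(T;h)_p\le c_r h^r N^r\|T\|_p$ for $T\in\CH_N$ obtained by combining Proposition~\ref{prop:modulus}(2) with the Bernstein inequality, and the conversion of the dyadic sum into the weighted sum via the monotonicity of $E_n(f)_p$ are all correct as written (the constants work out: with $2^m\le h^{-1}<2^{m+1}$ one has $h^r 2^{(k+1)r}\le 4^r h^r 2^{(k-1)r}$ and the block $2^{k-1}\le n\le 2^k-1$ contributes at least $2^{(k-1)r}E_{2^k}(f)_p$).

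The one place where your sketch would fail as literally described is the $S_1^*$ term. The ``analogous'' bound is $\omega_r(S_1^*;h)_p\le c_r h^r\|S_1^*\|_p$, and $\|S_1^*\|_p\le \|f\|_p+E_1(f)_p$ involves $\|f\|_p$, which is not dominated by the right-hand side of the theorem (take $f$ a nonzero constant: the right-hand side vanishes while $\|S_1^*\|_p$ does not). The standard fix is to use that $\Delta_\tb^r$ annihilates constants for $r\ge 1$, so that with $S_0^*\in\CH_0$ the best constant approximation,
\[
\omega_r(S_1^*;h)_p=\omega_r(S_1^*-S_0^*;h)_p\le c_r h^r\|S_1^*-S_0^*\|_p\le 2c_r h^r E_0(f)_p,
\]
which is exactly the $n=0$ term of the sum. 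With that one-line emendation your argument is complete and matches the route the paper points to.
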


\section{Approximation on Triangle}
\setcounter{equation}{0}

The hexagon is invariant under the reflection group $\CA_2$, generated by 
the reflections in the edges of its three paris of parallel edges. In homogeneous
coordinates, the three reflections $\sigma_1$, $\sigma_2$ and $\sigma_3$
are given by 
$$
\tb  \sigma_1 :=  -(t_1,t_3,t_2),  \quad \tb \sigma_2 := -(t_2,t_1,t_3),
     \quad \tb\sigma_3:= -(t_3,t_2,t_1).
$$
Indeed, for example, the reflection in the direction $(\sqrt{3},1)$ becomes 
reflection in the direction of $\alpha = (1,-2,1)$ in $\RR_\HH^3$, which is
easy to see,  using $t_1+t_2+t_3 =0$, as given by $\tb- 2 \frac{\langle \alpha,\tb \rangle}{\langle \alpha,\alpha \rangle}  \alpha  = -(t_2,t_1,t_3) = t \sigma_2$, 
The reflection group $\CA_2$ is given by $\CA_2 =\{1, \sigma_1,\sigma_2,
\sigma_3, \sigma_1\sigma_2,\sigma_2\sigma_1\}$. 

Define operators $\CP^+$ and $\CP^-$ acting on functions $f(\tb)$ by 
\begin{equation} \label{CP^+}
\CP^\pm f(\tb) =  \frac{1}{6} \left[f(\tb) + f(\tb \sigma_1\sigma_2)+
   f(\tb \sigma_2\sigma_1)
        \pm  f(\tb \sigma_1) \pm  f(\tb \sigma_2)  \pm  f(\tb \sigma_3) \right].
\end{equation}
They are projections from the class of $H$-periodic functions onto the class
of invariant, respectively anti-invariant functions under $\CA_2$. The action of
these operators on elementary exponential functions was studied in \cite{K},
and more recently studied in \cite{Sun, LS} and in \cite{LSX}. For $\phi_\kb(\tb)$, 
we call the functions 
$$
 \TC_\kb(\tb) :=  \CP^+ \phi_\kb(\tb), \qquad \hbox{and} \qquad 
  \TS_\kb(\tb): = \frac{1}{i} \CP^- \phi_\kb(\tb) 
$$
a generalized cosine and a generalized sine, respectively.  For invariant 
functions, we can translate the results over the regular hexagon to those 
over one of its six equilateral triangles. We choose the triangle as 
\begin{align} \label{Delta}
   \Delta := & \{(t_1,t_2,t_3) : t_1 + t_2 + t_3 =0,   0 \le t_1,  t_2, -t_3 \le 1\}\\
           = & \{(t_1,t_2): t_1, t_2 \ge 0, \, t_1+t_2 \le 1\}. \notag
\end{align}
It is known that the generalized cosines $\TC_\kb$ are orthogonal with respect 
to the inner product 
$$
   \langle f, g \rangle_\Delta :=
       \frac{1}{|\Delta|}\int_\Delta f(\tb)\overline{g(\tb)} d\tb
      = 2 \int_\Delta f(t_1,t_2) \overline{g(t_1,t_2)} dt_1 dt_2,
$$
so that we can consider orthogonal expansions in terms of generalized 
cosine functions, 
$$
    f \sim \sum_{\kb \in \Lambda} \wh f_\kb \TC_\kb, \qquad 
        \wh f_\kb =   \langle f, \TC_\kb \rangle_\Delta, 
$$
where $\Lambda: = \{\kb \in \HH: k_1 \ge 0, k_2 \ge 0, k_3 \le 0\}$. It is 
known that $\la f,g\ra_H = \la f, g \ra_\Delta$ if $f \bar g$ is invariant. 
If $f$ is $\CA_2$ invariant and $H$-periodic, then it can be expanded
into the generalized cosine series and it can be approximated from the
space 
$$
   \CTC_n^* = \sspan \{\TC_\kb: \kb \in \Lambda, - k_3 \le n\}.
$$
This is similar to the situation in classical Fourier series,  in which even 
functions can be expanded in cosine series and approximated by 
polynomials of cosine. We state one theorem as an example.

\begin{thm} \label{thm:trig} 
If $f \in C(\Delta)$ is the restriction of a $\CA_2$ invariant function 
in $C(\overline{\Omega})$, then the (C,1) means of its 
generalized cosine series converge uniformly to $f$ on $\Delta$. 
\end{thm}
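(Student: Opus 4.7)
The plan is to reduce the statement to the uniform $(C,1)$ summability of the hexagonal Fourier series, \thmref{thm:(C,1)}. Let $F \in C(\overline{\Omega})$ be the $\CA_2$-invariant ($H$-periodic) function given by hypothesis, so that $F|_\Delta = f$. Applying \thmref{thm:(C,1)} to $F$ immediately gives $S_n^{(1)} F \to F$ uniformly on $\overline{\Omega}$, and restricting to $\Delta \subset \overline{\Omega}$ yields $S_n^{(1)}F|_\Delta \to f$ uniformly on $\Delta$. The whole content of the proof is therefore to check that, for $\tb \in \Delta$, the value $S_n^{(1)} F(\tb)$ is in fact the $n$-th $(C,1)$ mean of the generalized cosine expansion of $f$.

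For this I would rely on two simple observations. First, each shell $\JJ_k \subset \ZZ_H^3$ is stable under $\CA_2$, since the action only permutes and negates coordinates while preserving $j_1+j_2+j_3=0$ and $\max_i |j_i|=k$. Second, the $\CA_2$-invariance of $F$ forces the Fourier coefficients $\wh F_\jb$ to be constant on $\CA_2$-orbits in $\ZZ_H^3$. Applying $\CP^+$ to the $(C,1)$ partial sum (it commutes with the finite sums), using $\CP^+ \phi_\jb = \TC_\jb$, and then collecting terms by orbit representatives $\kb \in \JJ_k \cap \Lambda$, one obtains
\[
   S_n^{(1)} F(\tb) = \frac{1}{A_n^1} \sum_{k=0}^n A_{n-k}^1 \sum_{\kb \in \JJ_k \cap \Lambda} \wh f_\kb\, \TC_\kb(\tb),
\]
which is by definition the $n$-th $(C,1)$ mean of the generalized cosine series of $f$ on $\Delta$.

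The one piece of bookkeeping that requires care is matching the scaling between the hexagonal Fourier coefficients $\wh F_\kb$ (defined via $\langle\cdot,\cdot\rangle_H$) and the generalized cosine coefficients $\wh f_\kb = \langle f,\TC_\kb\rangle_\Delta$, since each orbit contributes several identical terms through $\CP^+ \phi_\kb = \frac{1}{6}\sum_{\sigma\in\CA_2}\phi_{\kb\sigma}$. This is handled by the identity $\langle F,G\rangle_H = \langle F,G\rangle_\Delta$, valid whenever $F\bar G$ is $\CA_2$-invariant; the orbit-size factors produced by $\CP^+$ then cancel against the factor $6 = |\CA_2|$ exactly. Once the normalizations line up, the proof is complete, and the same reduction scheme applies verbatim to transfer Abel summability, the Jackson estimate, and the Bernstein inequality from the hexagon to the triangle.
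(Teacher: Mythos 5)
Your argument is correct and is exactly the route the paper intends: the paper offers no explicit proof of this theorem, asserting only that results on the hexagon ``translate'' to the triangle for $\CA_2$-invariant functions, and your reduction --- identifying the $(C,1)$ means of the generalized cosine series of $f$ with the restriction to $\Delta$ of the $(C,1)$ means of the hexagonal Fourier series of the invariant extension $F$, then invoking Theorem~\ref{thm:(C,1)} --- is precisely that translation made explicit. The only caveat is in your final normalization remark: the orbit-size factor is $|\CA_2\kb|$ rather than $6$ for boundary orbits with nontrivial stabilizer, but this does not affect the argument since the block-by-block identification is an equality of orthogonal projections and is independent of how the $\TC_\kb$ are normalized.
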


We take this theorem as an example because the meaning of the $(C,1)$ 
means of the generalized cosine series should be clear and we do not
need to introduce any new definition or notation. The requirement of 
$f$ in the theorem may look redundant, but a moment of reflection shows 
that merely $f \in C(\Delta)$ is not enough. Indeed, in the classical Fourier 
analysis, an even function derived from even extension of a function $f$ 
defined on $[0,\pi]$ (by $f(-x) = f(x)$) satisfies $f(-\pi) = f(\pi)$, so that it is 
automatically a continuous $2\pi$ periodic function if $f$ is continuous. The 
$H$-periodicity, however, imposes a much stronger restriction on the function. 
Indeed, for a function $f$ defined on $\Delta$, we can then extend it to 
$F$ defined on $\Omega$ by $\CA_2$ symmetry. That is, we define 
$$
  F(\tb) = f (\tb \sigma), \qquad \tb \in \Delta \sigma, \quad \sigma \in \CA_2.
$$
It is evident that $\Delta = \cup_{\s \in \CA_2} \Delta \s$. In order that $F$
is a continuous $H$-periodic, we will need the restrictions of $F$ on two 
opposite linear boundaries of $\Omega$ are eqaul. Let $\partial_\Omega \Delta $ 
denote the part of boundary of $\Delta$ that is also a part of boundary of 
$\Omega$. Then 
$\partial_\Omega \Delta : = \{((t_1,t_2, -1):  t_1, t_2 \ge 0, t_1+t_2 =1\}$. 
Upon examining the explicit formula of $\s \in \CA_2$, we see that $F$ being 
continuous and $H$-periodic requires that
$$
   F(\tb) = F(\tb \s_2), \quad F(\tb \s_1) = F(\tb \s_2\s_1), \quad 
    F(\tb \s_3) = F(\tb \s_1\s_2), \quad    \tb \in \partial \Delta.
$$
In terms of $f$ this means
\begin{align} \label{f-periodic}
& f(t_1,t_2, -1) = f(-t_2,-t_1,1), \quad f(t_2, -1,t_1) = f(-t_1, 1, -t_2), \\
& f(-1,t_1,t_2) = f(1,-t_2,-t_1), \qquad  
\hbox{for $t_1+t_2 =1$ and $t_1,t_2 \ge 0$}.  \notag
\end{align}
Hence, only functions that satisfy the restrictions \eqref{f-periodic} can
be extended to $\CA_2$ invariant functions on $\Omega$ that are also 
$H$-periodic and continuous on $\Omega$. As a result, we can replace
the assumption on $f$ in Theorem \ref{thm:trig} by {\it if $f \in C(\Delta)$ 
and $f$ satisfies \eqref{f-periodic}.} It does not seem to be easy to classify
all functions that satisfy \eqref{f-periodic}, which is clearly satisfied if $f$
has $\CA_2$ symmetry.

 \end{document}